\documentclass[12pt]{amsart}
\usepackage{amsmath}
\usepackage{amsthm}
\usepackage{amssymb}
\usepackage{amsfonts}
\usepackage{bm}
\usepackage[shortlabels]{enumitem}
\usepackage[bookmarks=true,hyperindex,pdftex,colorlinks,citecolor=red,linkcolor=blue]{hyperref}
\usepackage{centernot}
\usepackage{marginnote}
\usepackage{bbm}
\usepackage[all]{xy}
\usepackage{tikz}
\usepackage{comment}
\usetikzlibrary{arrows}
\usetikzlibrary{shapes,decorations}
\usetikzlibrary{positioning}
\usepackage{xcolor}
\usepackage{bigints}
\usepackage{cancel}
\usepackage[
a4paper,                 
left=2.4cm,
right=2.4cm,
top=2.3cm,
bottom=2.5cm,
headheight=14pt,         
includeheadfoot
]{geometry}

\theoremstyle{plain}
\newtheorem{thm}{Theorem}[section]
\newtheorem{prop}[thm]{Proposition}
\newtheorem{lemma}[thm]{Lemma}
\newtheorem{cor}[thm]{Corollary}

\theoremstyle{remark}
\newtheorem{remark}[thm]{Remark}

\theoremstyle{definition}

\newcommand{\N}{\mathbb{N}}
\newcommand{\R}{\mathbb{R}}
\newcommand{\Z}{\mathbb{Z}}
\newcommand{\T}{\mathbb{T}}
\newcommand{\one}{\mathbbm{1}}

\newcommand{\norm}[1]{\left\Vert #1\right\Vert}

\definecolor{darkgreen}{rgb}{.2,.6,.2}

\begin{document}
	
	\title[Polynomial gaps for Kreiss bounded semigroups and operators]
	{Polynomial gaps below linear growth for Kreiss bounded semigroups and operators}
	\author[L. Arnold]{Loris Arnold}

	\address[L. Arnold]{Normandie Univ, UNICAEN, CNRS, LMNO, 14000 Caen, France}
	\email{lfj.arld@gmail.com}
	\subjclass[2020]{47D06, 47A10, 47B65}
	\keywords{Kreiss bounded semigroup, Kreiss bounded operator, resolvent estimate,
		positive operator, positive semigroup, growth bound, Hilbert space, $L^p$-space, UMD spaces}
	
	\begin{abstract} We prove that every Kreiss bounded $C_0$-semigroup $(T_t)_{t\ge0}$ on a Hilbert space satisfies \[ \norm{T_t}\le C(1+t)^{1-\varepsilon_K}, \qquad t\ge0, \] where $\varepsilon_K>0$ depends explicitly only on the Kreiss constant. The same conclusion is obtained for positive Kreiss bounded $C_0$-semigroups on $L^p$-spaces, $1<p<\infty$, and in discrete time for Kreiss bounded operators on Hilbert spaces and positive Kreiss bounded operators on $L^p$-spaces. We further obtain a non-quantitative polynomial gap for individually eventually positive Kreiss bounded $C_0$-semigroups on $L^p$-spaces. Finally we prove that every Kreiss bounded operator on a UMD Banach space has a polynomial gap below linear growth\end{abstract}
	\maketitle

	\section{Introduction}
	
	A classical problem in operator theory and in the asymptotic analysis of evolution equations is to determine to what extent first-order resolvent estimates control the growth of the corresponding semigroup or of the powers of an operator. In this paper, we consider the continuous and discrete Kreiss conditions, both on Hilbert spaces and, under positivity assumptions, on $L^p$-spaces.
	
	\subsection*{Background and motivation}
	
	On Hilbert spaces, it was first established that Kreiss boundedness of a $C_0$-semigroup $(T_t)_{t\ge 0}$ implies the linear bound $
	\|T_t\| = O(t)$ (see \cite{EisnerZwart2006,RozendaalVeraar2018}). This estimate was subsequently improved in \cite{Arnold2022} to
	\[
	\|T_t\|
	=
	O\left(\frac{t}{\sqrt{\log(t+1)}}\right).
	\]
	Related estimates for uniformly eventually positive Kreiss bounded semigroups on Banach
	lattices were obtained in \cite{ArnoldCoine2023}. In particular, on
	$L^p$-spaces, $1<p<\infty$, the logarithmic improvement
	\[
	\|T_t\|
	=
	O\left(
	\frac{t}{(\log(t+1))^{\max\{1/p,1/p'\}}}
	\right)
	\]
	was proved, whereas on $(AL)$- and $(AM)$-spaces a genuine polynomial
	improvement
	\[
	\|T_t\|=O(t^{1-\varepsilon})
	\]
	for some $\varepsilon>0$ was already obtained.
	
	In discrete time, and in contrast to the continuous-time setting, Kreiss boundedness of an operator $T$ on any Banach space $X$ always implies the linear bound $
	\|T^n\| = O(n).$ In the Hilbert space setting, this estimate was improved by Cohen, Cuny, Eisner and Lin \cite{CohenCunyEisnerLin2020} to 
	\[
	\|T^n\| = O\!\left(\frac{n}{\sqrt{\log(n+1)}}\right),
	\]
	while Cuny, in \cite{CunyLp}, obtained, for Kreiss bounded operators on UMD spaces, the estimate
	\[
	\|T^n\| = O\!\left(\frac{n}{(\log(n+1))^{1/s}}\right),
	\]
	where $s := \min(q,q^*)$ and $q$, $q^*$ denote the cotypes of $X$ and $X^*$ respectively.
	There is, however, a strong obstruction to any uniform polynomial improvement. Indeed, Eisner and Zwart constructed, for every $\gamma\in(0,1)$, a Kreiss bounded $C_0$-semigroup on a Hilbert space such that
	\[
	\norm{T_t}\gtrsim t^\gamma
	\qquad (t\to\infty);
	\]
	see \cite[Example 4.4]{EisnerZwart2006}. Thus, there is no $\varepsilon>0$ such that
	\[
	\norm{T_t}=O(t^{1-\varepsilon})
	\]
	for every Kreiss bounded $C_0$-semigroup on a Hilbert space. The same phenomenon occurs in discrete time: Bonilla and M\"uller showed that no universal polynomial gap below linear growth can hold, even for uniformly Kreiss bounded operators; see \cite{BonillaMuller2021}.
	
	The purpose of this paper is to show that a genuine polynomial gap nevertheless exists in the Kreiss-bounded setting. For operators and semigroups on Hilbert spaces, and for positive operators and semigroups on $L^p$-spaces,, we obtain explicit quantitative bounds in discrete and continuous time, with exponents depending solely on the Kreiss constant and space parameters. In contrast, for individually eventually positive semigroups, we establish a non-quantitative continuous-time analogue. Finally, we extend our results to Kreiss-bounded operators on UMD spaces, with bounds involving the geometric constants of the underlying space.
	
	The quantitative results obtained in Sections \ref{secContinuous} and
	\ref{secDiscrete} are based on the same self-improvement mechanism. A
	Kreiss-type resolvent estimate first yields a triangular estimate. Applied to
	a suitably normalized reversed orbit, this gives a reciprocal-weight
	inequality that produces a fixed gain when the scale is doubled. Iterating
	this gain and using an upper orbit estimate yields the polynomial improvement.
	In the Hilbertian setting, the triangular estimate follows from a
	Fourier--Plancherel multiplier argument, whereas in the positive $L^p$
	setting it follows from the positive convolution principle of Weis and its
	discrete counterpart.
	
	Section \ref{secUMD} implements the same general principle in Fourier norms.
	The UMD property provides the required triangular estimate, while lower
	Fourier decompositions turn the comparison of consecutive dyadic blocks into
	a multiplicative gain. Iteration again yields a polynomial gap. The
	individually eventually positive case requires a separate argument, since the
	absence of a uniform positivity time prevents a direct application of the
	positive convolution method; the resulting polynomial gap is therefore
	non-quantitative.

	\subsection*{Notation and main results}
	
	Throughout the paper, $H$ denotes a complex Hilbert space and $(\Omega, \Sigma, \mu)$ a measure space. For $1 < p < \infty$, we write $L^p(\Omega) = L^p(\Omega, \Sigma, \mu)$ (or simply $L^p$ when convenient). Its positive cone is
	\[
	L^p(\Omega,\mu)_+
	=
	\{f\in L^p(\Omega,\mu;\mathbb R): f\ge0 \text{ a.e.}\}.
	\] Moreover, we denote by $p'$ the conjugate exponent of $p$.
	
	For $f \in L^2(\R;H)$, we define its Fourier transform $\widehat f \in L^2(\R;H)$ (initially on $L^1\cap L^2$) by
	\[
	\widehat f(\beta) := \int_{\R} e^{-i\beta s} f(s) \, ds, \qquad \beta \in \R.
	\]
	Similarly, for a sequence $f = (f_k)_{k\in\Z} \in \ell^2(\Z;H)$, its Fourier transform $\widehat f \in L^2(\T;H)$ is defined by
	\[
	\widehat f(\gamma) := \sum_{k\in\Z} f_k \gamma^{-k}, \qquad \gamma \in \T,
	\]
	where $\T = \{z \in \mathbb{C} : |z| = 1\}$ carries the normalized Haar measure.
	
	If $B$ is a closed operator, we write
	\[
	R(\lambda,B):=(\lambda I-B)^{-1},\qquad \lambda\in\rho(B),
	\]
	for its resolvent. We write $B(X)$ for the algebra of bounded linear operators on a Banach space $X$.
	
	In continuous time, $(T_t)_{t\ge 0}$ will always denote a $C_0$-semigroup with generator $-A$. Its Kreiss constant and Abel constant are respectively defined by
	\begin{equation*}\label{eqDefKreissContinuous}
		C_K(T)
		:=
		\sup_{\substack{r>0\\ \beta\in\R}}
		r\,\norm{R(r+i\beta,-A)}
		\qquad\text{and}\qquad
		C_A(T)
		:=
		\sup_{r>0}
		r\,\norm{R(r,-A)}.
	\end{equation*}
	The semigroup $(T_t)_{t\ge 0}$ is said to be \emph{Kreiss bounded} when $C_K(T)<\infty$ and \emph{Abel bounded} when $C_A(T)<\infty$. Notice that $C_A(T)\le C_K(T)$, so that Kreiss boundedness implies Abel boundedness. Moreover, when $(T_t)_{t\ge 0}$ acts positively on an $L^p$-space, the reverse implication holds (see \cite[Proposition 2.6]{ArnoldCoine2023}).
	
	In discrete time, for $T\in B(X)$, we set
	\begin{equation*}\label{eqDefKreissDiscrete}
		C_K(T)
		:=
		\sup_{|\lambda|>1}
		(|\lambda|-1)\norm{R(\lambda,T)}
		\qquad\text{and}\qquad
		C_A(T)
		:=
		\sup_{\rho>1}
		(\rho-1)\norm{R(\rho,T)}.
	\end{equation*}
	
	We use the same notation $C_K(T)$ and $C_A(T)$ whether $T$ represents a continuous semigroup or a discrete-time operator, as there is no risk of confusion between the two. The operator $T$ is said to be \emph{Kreiss bounded} if $C_K(T)<\infty$ and \emph{Abel bounded} if $C_A(T)<\infty$. As before, one always has $C_A(T)\le C_K(T)$. Furthermore, if $T$ is positive on an $L^p$-space, then Abel boundedness implies Kreiss boundedness (see \cite[Proposition 5.13]{CohenCunyEisnerLin2020}).
	
	For $r>1$ and $C>0$, it will be convenient to introduce the exponent
	\begin{equation*}\label{eq:def-eta}
		\varepsilon_r(C)
		:=
		\frac1r
		\log_2\!\left(
		1+(2eC)^{-r}
		\right)>0.
	\end{equation*}
	With this notation, our main results take a particularly simple form.
	
	\begin{thm}\label{thmContinuousMain}
		
		\begin{enumerate}[(i)]
			\item
			If $(T_t)_{t\ge 0}$ is a Kreiss bounded $C_0$-semigroup on a Hilbert space $H$ then there exists $C>0$ such that
			\begin{equation}\label{eqMainHilbertContinuous}
				\norm{T_t}
				\le
				C(1+t)^{1-\varepsilon_2(C_K(T))},
				\qquad t\ge 0.
			\end{equation}
			
			\item
			Let $1<p<\infty$. If $(T_t)_{t\ge 0}$ is a positive Kreiss bounded $C_0$-semigroup on $L^p(\Omega)$, then there exists $C>0$ such that
			\begin{equation*}\label{eqMainPositiveContinuous}
				\norm{T_t}
				\le
				C(1+t)^{1-\varepsilon_q(C_A(T))},
				\qquad t\ge 0,
			\end{equation*}
			where $q:=p\wedge p'$.
		\end{enumerate}
	\end{thm}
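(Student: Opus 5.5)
The plan is to derive everything from a single weighted $L^2$ (resp.\ $L^q$) inequality for orbits and then run a doubling/iteration argument. I describe it for (i) and indicate the modification for (ii).

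\emph{Step 1: triangular estimate.} For $r>0$ the function $\beta\mapsto R(r+i\beta,-A)$ is the Fourier transform of $\one_{[0,\infty)}(t)e^{-rt}T_t$. By Plancherel in $L^2(\R;H)$ the causal convolution
\[
(K_rf)(t)=\int_0^t e^{-r(t-s)}T_{t-s}f(s)\,ds
\]
therefore satisfies $\norm{K_r f}_{L^2(\R_+;H)}\le \frac{C_K(T)}{r}\norm{f}_{L^2(\R_+;H)}$. To justify this I would first work with a shifted generator, or on $f$ with compact support, and then pass to the limit.

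In case (ii) Plancherel is not available. Instead I would invoke Weis' positive convolution principle on $L^p(\R;L^p(\Omega))$: for a positive kernel, the convolution norm is controlled by the Laplace transform at the real point $r$. This gives the same estimate with $C_A(T)$ in place of $C_K(T)$, but only after interpolating against a trivial bound, and that interpolation is where the exponent $q=p\wedge p'$ enters.

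\emph{Step 2: reversed orbit and reciprocal-weight inequality.} Fix $x$ with $\varphi(s):=\norm{T_sx}>0$ (the case of a vanishing orbit is trivial). Apply Step~1 to the normalized orbit $f(s)=\one_{[0,\tau]}(s)\,T_sx/\varphi(s)$, so that $\norm f_2^2=\tau$. Since $T_{t-s}T_sx=T_tx$, we get
\[
(K_rf)(t)=T_tx\int_0^{t\wedge\tau}\frac{e^{-r(t-s)}}{\varphi(s)}\,ds .
\]
With $r=1/\tau$, restricting to $t\in[\tau,2\tau]$ and using $e^{-r(t-s)}\ge e^{-2}$ should give, with $\Phi(t):=\int_0^t ds/\varphi(s)$,
\[
\int_\tau^{2\tau}\varphi(t)^2\,\Phi(\tau)^2\,dt\;\lesssim\;(eC_K)^2\tau^3 .
\]
The same computation on $[0,\tau]$ gives $\int_0^\tau\varphi^2\Phi^2\lesssim (eC_K)^2\tau^3$, by splitting the window into halves.

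\emph{Step 3: fixed gain under doubling.} Since $\Phi'/\Phi=1/(\varphi\Phi)$, Cauchy--Schwarz gives
\[
\log\frac{\Phi(2\tau)}{\Phi(\tau)}\ \ge\ \frac{\tau^2}{\int_\tau^{2\tau}\varphi\Phi}\ \ge\ \frac{\tau^{3/2}}{\bigl(\int_\tau^{2\tau}\varphi^2\Phi^2\bigr)^{1/2}} .
\]
Alternatively, one can compare $\Phi(2\tau)^2$ with $\Phi(\tau)^2$ directly via $\Phi(2\tau)-\Phi(\tau)\ge \tau^2/\int_\tau^{2\tau}\varphi$ together with the reciprocal-weight bound. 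Either way, tracking constants carefully should yield
\[
\Phi(2\tau)^2\ge\bigl(1+(2eC_K)^{-2}\bigr)\Phi(\tau)^2 .
\]
This is exactly the form producing $\varepsilon_2(C_K)=\tfrac12\log_2(1+(2eC_K)^{-2})$; in case (ii) the $q$-th powers give $\varepsilon_q(C_A)$. Iterating over dyadic scales gives $\Phi(t)\gtrsim \Phi(1)\,t^{\varepsilon}$ with $\varepsilon=\varepsilon_2(C_K)$. The normalization $\Phi(1)\gtrsim 1/\sup_{s\le1}\norm{T_s}$ is uniform in $\norm x=1$.

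\emph{Step 4: upper orbit estimate.} This is the step I expect to be the main obstacle. The task is to convert the lower growth $\Phi(t)\gtrsim t^{\varepsilon}$ back into $\varphi(t)\lesssim t^{1-\varepsilon}$ uniformly in $\norm x\le1$. The naive bound $\varphi(t)\le \norm{T_{t-s}}\varphi(s)$ combined with $\norm{T_u}=O(u)$ loses a full power of $t$.

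I would therefore first try to reuse Step~2 in the form $\int_\tau^{2\tau}\varphi^2\,dt\lesssim \tau^3/\Phi(\tau)^2\lesssim\tau^{3-2\varepsilon}$, which bounds the $L^2$-average of $\varphi$ on $[\tau,2\tau]$ by $\tau^{1-\varepsilon}$. To pass from this average to a pointwise bound, I would run the same argument for the adjoint semigroup, which is Kreiss bounded with the same constant, and write
\[
\tau\,\langle T_{2\tau}x,y\rangle=\int_{\tau/2}^{3\tau/2}\langle T_sx,\,T^*_{2\tau-s}y\rangle\,ds .
\]
Applying Cauchy--Schwarz then gives $|\langle T_{2\tau}x,y\rangle|\lesssim \tau^{-1}\,\tau^{\frac32-\varepsilon}\,\tau^{\frac32-\varepsilon}$. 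This is too weak as written, so the final version must instead apply the triangular estimate directly to the pairing. Concretely, I would plug $f(s)=T_sx/\varphi(s)$ and a dual reversed orbit $g(t)=T^*_{2\tau-t}y/\psi(t)$ into $\langle K_rf,g\rangle$, so that the bilinear form produces $\langle T_{2\tau}x,y\rangle\,\Phi\Psi$ and each factor carries one $t^{\varepsilon}$ gain. This is the point where most care is needed.

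In case (ii) the same pairing works with $L^q$ and $L^{q'}$ norms, using $T_t^*$ on $L^{p'}$, which is again positive and Abel bounded.
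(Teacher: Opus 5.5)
Your proposal has a genuine gap at its crux: nothing in it turns a doubling gain into a pointwise sublinear bound on $\norm{T_t}$. The gap originates in Steps 2--3 being anchored at time $0$.

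What those steps deliver is $\Phi(\tau)\gtrsim\tau^{\varepsilon}$ with $\varepsilon<1/2$ (because $C_K(T)\ge1$), together with $\int_\tau^{2\tau}\norm{T_sx}^2\,ds\lesssim\tau^3/\Phi(\tau)^2\lesssim\tau^{3-2\varepsilon}$. Both are strictly weaker than what the known upper orbit estimate $\int_0^t\norm{T_sx}^2\,ds\le M_2^2t^2\norm{x}^2$ gives for free. Indeed, by Cauchy--Schwarz, $t^2\le\Phi(t)\int_0^t\norm{T_sx}\,ds\le\Phi(t)\,M_2t^{3/2}$, so $\Phi(t)\ge t^{1/2}/M_2$ when $\norm{x}=1$. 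Your doubling for $\Phi$ therefore adds nothing.

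Step 4 cannot repair this. Your bilinear pairing yields $|\langle T_{2\tau}x,y\rangle|\lesssim\tau^2/(\Phi(\tau)\Psi(\tau))$. A gain $\tau^{\varepsilon}$ in each factor gives $\tau^{2-2\varepsilon}$, which is worse than linear. Even the a priori $\tau^{1/2}$ per factor only recovers the linear bound. A sublinear bound along this route would need $\Phi\Psi\gtrsim\tau^{1+\varepsilon}$, i.e.\ an improvement of the $L^2$ orbit estimate itself, and your outline provides no such thing.

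The missing idea is to anchor the doubling at the endpoint and close it with the upper orbit estimate. Your outline never invokes that estimate, although it is indispensable.

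Fix $t\ge2$ and $x$, and set $z_s:=T_{t-s}x$ and $W(m):=\int_0^m\norm{z_s}^2ds=\int_{t-m}^t\norm{T_ux}^2du$. Use the window form of your triangular estimate, $(\mathcal K_mf)(s)=\int_m^{2m}T_{v-s}f(v)\,dv$ for $s\in[0,m]$. Taking $r=1/(2m)$ and removing the exponential weights costs only a factor $e$, so $\norm{\mathcal K_m}\le 2eC_K(T)m$; your choice $r=1/\tau$ would cost $e^2$.

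Apply $\mathcal K_m$ to the reciprocal-weight input $f(v):=\norm{z_v}^{-2}z_v$, not to unit vectors. Since $T_{v-s}z_v=z_s$, you get $\mathcal K_mf=A_mz$ on $[0,m]$, where $A_m=\int_m^{2m}\norm{z_v}^{-2}dv=\norm{f}_2^2$. The triangular bound then gives $W(m)A_m\le(2eC_K(T)m)^2$. Cauchy--Schwarz gives $m^2\le(W(2m)-W(m))A_m$. Together these yield $W(2m)\ge(1+(2eC_K(T))^{-2})W(m)$.

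Now iterate from $W(1)\ge D^{-2}\norm{T_tx}^2$, where $D=\sup_{s\le1}\norm{T_s}$, up to $M=2^\ell\in(t/2,t]$. Bounding $W(M)\le M_2^2t^2\norm{x}^2$ by the orbit estimate gives $\norm{T_tx}\lesssim t^{1-\varepsilon_2(C_K(T))}\norm{x}$ directly, with no duality needed.

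In (ii) there is no interpolation. Weis' principle gives the window estimate on $L^p(\R;L^p)$ with constant $2eC_A(T)m$ directly. The same argument with $r=p$ and the positive orbit estimate gives the exponent $\varepsilon_p(C_A(T))$. The exponent with $q=p\wedge p'$ comes from running the argument also for the positive adjoint semigroup on $L^{p'}$, which has the same Abel constant, and keeping the larger exponent.
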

	
	When positivity holds only individually and eventually, we establish : 
	\begin{thm}\label{thmIndEv}
		Let $1<p<\infty$. If $(T_t)_{t\ge 0}$ is an individually eventually positive Kreiss bounded $C_0$-semigroup on $L^p(\Omega)$, then there exist $C>0$ and $\varepsilon >0$, such that
		\begin{equation*}\label{eqMainindPositiveContinuous}
			\norm{T_t}
			\le
			C(1+t)^{1-\varepsilon},
			\qquad t\ge 0.
		\end{equation*}

	\end{thm}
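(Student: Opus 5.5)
We want a polynomial gap below linear growth for an individually eventually positive Kreiss bounded $C_0$-semigroup on $L^p$, where positivity cannot be used uniformly. The plan is to reduce to a semigroup that is \emph{uniformly} eventually positive, or positive after a compact perturbation, via a Baire category argument. Then we rerun the self-improvement mechanism of Theorem \ref{thmContinuousMain}(ii), accepting non-explicit constants.

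\textbf{Step 1: uniform positivity time on the cone.} For $n\in\N$ set
\[
F_n:=\{f\in L^p_+ : T_tf\ge 0 \text{ for all } t\ge n\}.
\]
Each $F_n$ is closed, by strong continuity and closedness of the cone, and $\bigcup_n F_n=L^p_+$ by individual eventual positivity. Since $L^p_+$ is a complete metric space, Baire's theorem gives $n_0$, $f_0\in L^p_+$ and $\delta>0$ such that every $f\in L^p_+$ with $\norm{f-f_0}<\delta$ satisfies $T_tf\ge0$ for $t\ge n_0$.

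For any $g\in L^p_+$ with $\norm g<\delta$ we then have $T_t(f_0+g)\ge 0$ and $T_tf_0\ge0$. This does not yet give $T_tg\ge0$. It does give the bound $(T_tg)^-\le T_tf_0$ for all $t\ge n_0$.

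\textbf{Step 2: localized positive domination.} I would use this one-sided domination, together with the linear bound $\norm{T_t}=O(t)$ valid for Kreiss bounded semigroups on Hilbert and $L^p$ (via \cite{ArnoldCoine2023}), to write $T_{t}=P_t-N_t$ for $t\ge n_0$. Here $N_t$ is controlled by the single orbit of $f_0$. The reversed-orbit and reciprocal-weight argument from Section~\ref{secContinuous} only needs a triangular estimate for the convolution $\int_0^t T_{t-s}u(s)\,ds$ with nonnegative data. I would obtain it by applying Weis' positive convolution principle to the positive part, and by bounding the error term through the growth of $\norm{T_tf_0}$.

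The decisive point is that the error involves a fixed vector $f_0$. Its orbit satisfies $\norm{T_tf_0}=o(t)$, because Kreiss boundedness plus reflexivity gives mean ergodicity, so $\frac1t\int_0^t T_s f_0\,ds$ converges. Hence at sufficiently large dyadic scales the error is a small fraction of the main term. The doubling gain of Section~\ref{secContinuous} then persists with a slightly worse factor $1+c'$, $c'>0$, starting from some scale $2^{k_0}$. The scale $k_0$ depends on $f_0,\delta$, which is why $\varepsilon$ is non-quantitative.

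\textbf{Step 3: iteration.} Iterate the gain $M(2^{k+1})\le 2(1+c')^{-1}M(2^k)$ for $k\ge k_0$, where $M(t)=\sup_{s\le t}\norm{T_s}$. This gives $M(2^k)\lesssim 2^{k(1-\varepsilon)}$ with $\varepsilon=\log_2(1+c')$, and interpolating between dyadic scales gives the claim.

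\textbf{Main obstacle.} The hardest step is Step 2. Passing from positivity on a small cone-neighbourhood of $f_0$ to a usable triangular estimate for \emph{all} nonnegative inputs is delicate. The dominating term $T_tf_0$ must be shown to be genuinely negligible uniformly over unit inputs, not just for a single one, and the scaling $g\mapsto \delta g/\norm g$ introduces a factor $\norm g/\delta$ that must be absorbed.

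If the direct approach fails, I would first try a second Baire argument on the dual cone, to get eventual positivity of $T_t^*$ near some $\varphi_0$. I would then use the lattice identity $|T_tg|\le T_t|g|+2(T_t|g|)^-$ together with duality pairing against $\varphi_0$ to obtain an $L^p$-norm comparison.
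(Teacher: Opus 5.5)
There is a genuine gap, and it sits exactly where you place the main obstacle. Step~1 is correct, but it only gives the one-sided, norm-weighted bound $T_tg\ge-\frac{2\norm{g}}{\delta}T_tf_0$ for $g\ge0$, $t\ge n_0$. This does not give a decomposition $T_t=P_t-N_t$ with $(P_t)$ a positive \emph{linear} family. The natural choice $N_tg=c\,\varphi(g)T_tf_0$ would need a positive functional with $\varphi(g)\ge\norm{g}_p$ on $L^p_+$, and no such functional exists on an infinite-dimensional $L^p$, $p>1$. The bound also gives no upper estimate for $T_tg$. Since $|T_ug|=T_ug+2(T_ug)^-$, controlling the modulus still requires controlling $T_ug$ itself, which is the original problem. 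So Weis' principle has no positive kernel to act on.

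Restricting to nonnegative data is not legitimate either. Lemma \ref{lemContinuousSelfImprovement} is applied to $f(v)=w(v)^{-1/(p-1)}T_{t-v}x$, which is signed for $v$ close to $t$ even when $x\ge0$. Your smallness claim is unfounded: $\norm{T_tf_0}=o(t)$ does not help, because the error accumulates over a window of length $L$ and is naturally bounded only by $\delta^{-1}\norm{\int_0^LT_uf_0\,du}\,\norm{F}=O(L)$. Smallness is not needed anyway, since any $O(L)$ triangular bound suffices.

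What is needed, and absent from your plan, is the upper orbit estimate $\bigl(\int_0^t\norm{T_sx}_p^p\,ds\bigr)^{1/p}\le C_1t\norm{x}$. Even granting $\norm{T_t}=O(t)$, you only get $t^{1+1/p}$, which makes the conclusion of Lemma \ref{lemContinuousSelfImprovement} worse than linear. Likewise, the mechanism gives the doubling inequality $W(2m)\ge(1+\theta)W(m)$ for the reversed orbit of a single vector, closed against that orbit estimate. It does not give a recursion for $M(t)=\sup_{s\le t}\norm{T_s}$.

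The missing idea is a \emph{two-sided} domination, and your Baire argument does produce it if you perturb $f_0$ in both directions inside the cone. For real $x$ and $y\ge0$ with $|x|\le y$, set $\eta=\delta/(4\norm{y})$. Both vectors $f_0+\eta(y\pm x)$ lie in $L^p_+$ within distance $\delta/2$ of $f_0$, so their orbits are positive for $t\ge n_0$. This yields $|T_tx|\le T_t\bigl(y+\tfrac{4\norm{y}}{\delta}f_0\bigr)$ for all $t\ge n_0$.

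Now take real $F$, $x=\int h(v-u)F(v)\,dv$ and $y=\norm{h}_{p'}g_F$, where $g_F=\bigl(\int_\R|F(v)|^p\,dv\bigr)^{1/p}$. You get $\bigl|\int h\,\mathcal V_{n_0,L}F\bigr|\le\norm{h}_{p'}\int_{n_0}^LT_uG_F\,du$ with $G_F=g_F+\frac{4\norm{g_F}}{\delta}f_0$. Lemma \ref{lemIndividualLatticeDuality} together with Ces\`aro boundedness then gives $\norm{\mathcal V_L}\le\Gamma L$. The orbit estimate follows the same way: dominate $|T_tf|$ by $T_t\bigl(|f|+\frac{4\norm{f}}{\delta}f_0\bigr)$ and argue as in Proposition \ref{propIndividualEstimates}(ii). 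Lemma \ref{lemContinuousSelfImprovement} with $r=p$ then concludes.

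This is the structure of the paper's proof. The difference is that the paper takes the domination $|T_uJ_Fh|\le c_F\norm{h}_{p'}T_ug_F$ from Proposition \ref{propIndividualDomination}, with $F$-dependent $\tau_F,c_F$, and recovers uniform constants by Banach--Steinhaus.
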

	
	The discrete analogue is as follows.
	
	\begin{thm}\label{thmDiscreteMain}
		\begin{enumerate}[(i)]
			\item
			If $T$ is Kreiss bounded on a complex Hilbert space $H$, then there exists $C>0$ such that
			\begin{equation*}\label{eqMainDiscreteH}
				\norm{T^N}
				\le
				C(N+1)^{1-\varepsilon_2(C_K(T))},
				\qquad N\ge 0.
			\end{equation*}
			
			\item
			Let $1<p<\infty$. If $T$ is Kreiss bounded and positive on $L^p(\Omega)$, then there exists $C>0$ such that
			\begin{equation*}\label{eqMainPositiveDiscrete}
				\norm{T^N}
				\le
				C(N+1)^{1-\varepsilon_q(C_A(T))},
				\qquad N\ge 0.
			\end{equation*}
		\end{enumerate}
	\end{thm}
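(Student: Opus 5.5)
The plan is to run the discrete version of the self-improvement mechanism described in the introduction: a triangular estimate, then a reciprocal-weight inequality, then a doubling gain, then iteration. The target exponent tells us what the doubling step must give. Put $M(n):=\max_{0\le k\le n}\norm{T^k}$ and $\delta:=(2eC)^{-r}$, where $C=C_K(T)$, $r=2$ in case (i), and $C=C_A(T)$, $r=q$ in case (ii). Then we need
\[
M(2N)\le \frac{2}{(1+\delta)^{1/r}}\,M(N)\qquad(N\ \text{large}),
\]
because iterating over dyadic scales gives $M(2^k)\lesssim 2^{k(1-\frac1r\log_2(1+\delta))}=2^{k(1-\varepsilon_r(C))}$. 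Monotonicity of $M$ then interpolates between dyadic points and yields both parts of Theorem \ref{thmDiscreteMain}. The a priori bound $\norm{T^n}=O(n)$ for Kreiss bounded operators is only used to make sure $M$ is finite and of at most linear growth, so the iteration starts.

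\textbf{Step 1: triangular estimate.} Fix $N$ and $\rho=1+1/N$. For finitely supported sequences $(x_j)_{j\ge0}$ in $H$, the sequence $\bigl(\sum_{j+k=n}\rho^{-k}T^k x_j\bigr)_n$ is the convolution of $(x_j)$ with $(\rho^{-k}T^k)_k$. Its Fourier symbol on $\T$ is, up to the factor $\rho\gamma$, the resolvent $R(\rho\gamma,T)$, whose norm is at most $C_K(T)/(\rho-1)=NC_K(T)$.
\begin{itemize}
\item In case (i), Plancherel on $\ell^2(\Z;H)$ bounds the convolution by $NC_K(T)$ times $\norm{(x_j)}_{\ell^2}$.
\item Restricting to the triangle $j+k\le N$, the damping factor satisfies $\rho^{-k}\ge e^{-1}$, so removing it costs at most a factor $e$. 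The triangle truncation should cost one more factor $2$, which I would obtain by splitting the triangle into two blocks. Together with the factor $N$ this gives the constant $2eC_K(T)N$.
\item In case (ii) Plancherel is unavailable. Instead I would use the discrete positive convolution principle of Weis. Positivity lets us dominate the vector-valued convolution on $\ell^q(L^p)$ by the scalar Abel-type quantity $\sup_{\rho>1}(\rho-1)\norm{R(\rho,T)}$, which is why $C_A(T)$ appears and $\ell^2$ is replaced by $\ell^q$ with $q=p\wedge p'$.
\end{itemize}

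\textbf{Step 2: reciprocal-weight inequality.} Choose unit vectors $x$ and $y$ (in $H$, or in $L^p$ and $L^{p'}$) with $|\langle T^{2N}x,y\rangle|\approx\norm{T^{2N}}$. Test the triangular estimate against the reversed orbits $x_j=T^{N-j}x/w_j$ and $y_k=(T^*)^{N-k}y/w'_k$. The weights $w_j$, $w'_k$ are the normalizing norms of those orbit vectors, so that each term in the triangle contributes $\gtrsim\norm{T^{2N}}$ divided by products of norms of lower powers. The upper orbit bound $\norm{T^{m}}\le M(N)$ for $m\le N$ then turns this into an inequality of the form
\[
\sum_{n\le N}\frac{\norm{T^{2N}}}{M(N)^2}\lesssim 2eC\,N\Bigl(\sum_j w_j^{-r}\Bigr)^{1/r}\Bigl(\sum_k w_k'^{-r'}\Bigr)^{1/r'}.
\]
H\"older with exponents $r,r'$ then converts it into a lower bound showing that a fixed proportion $\delta$ of mass must sit on the top dyadic block. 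Comparing this block with the lower block produces the factor $(1+\delta)^{1/r}$ in the doubling inequality above.

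\textbf{Main obstacle.} I expect Step 2 to be the main difficulty: choosing the normalization of the reversed orbit so that the constant comes out exactly as $(2eC)^{-r}$, and making the gain a genuine multiplicative improvement of $M(2N)$ over $2M(N)$, rather than a logarithmic improvement as in \cite{CohenCunyEisnerLin2020}. My first attempt would be $w_j:=\norm{T^{N-j}x}$, truncated below by $1$. I would then use the elementary inequality $\bigl(\sum_j a_j\bigr)\bigl(\sum_j a_j^{-1}\bigr)\ge n^2$ to produce the reciprocal weights. If the constants do not close, I would allow a fixed multiple of the scale, replacing $2N$ by $LN$, which only changes the implicit constant $C$ in the theorem.
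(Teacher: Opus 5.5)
Your Step 1 is essentially right. Step 2, which carries the argument, does not work as described: it iterates the wrong quantity, and it omits the estimate that turns the doubling into a power gain.

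\textbf{What actually doubles.} This method does not produce a scale-by-scale inequality $M(2N)\le c\,M(N)$ for the running maximum. What grows geometrically is, for fixed $N$ and $x$, the $\ell^r$-mass $W_m:=\sum_{j=0}^{m-1}\norm{z_j}^r$ of a single reversed orbit $z_j:=T^{N-j}x$, and the test is one-sided with reciprocal weights. Take $I_m=\{0,\dots,m-1\}$, $J_m=\{m,\dots,2m-1\}$ with $2m\le N+1$. Apply $(K_mf)_i=\sum_{j\in J_m}T^{j-i}f_j$, $i\in I_m$, to $f_j:=\norm{z_j}^{-r/(r-1)}z_j$.
\begin{itemize}
\item Since $T^{j-i}z_j=z_i$, you get $(K_mf)_i=A_mz_i$ with $A_m:=\sum_{j\in J_m}\norm{z_j}^{-r/(r-1)}=\norm{f}_{\ell^r}^r$.
\item The triangular bound therefore reads $W_m^{1/r}A_m^{1/r'}\le 2eCm$, where $r'=r/(r-1)$.
\item H\"older on $J_m$ gives $m\le (W_{2m}-W_m)^{1/r}A_m^{1/r'}$.
\item Dividing the two yields $W_{2m}\ge\bigl(1+(2eC)^{-r}\bigr)W_m$.
\end{itemize}
This holds however small the $\norm{z_j}$ are, and it uses no pointwise bound on $\norm{T^k}$.

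Your bilinear test runs in the opposite direction. The bound $\norm{T^k}\le M(N)$ controls the normalizing weights from above, but closing your displayed inequality needs upper bounds on $\sum_j w_j^{-r}$. The only lower bounds available are of the type $\norm{T^{2N-i}x}\ge\norm{T^{2N}x}/M(N)$. With these, the test returns at best $\norm{T^{2N}}\lesssim 2eC\,M(N)^2$, which is weaker than submultiplicativity.

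\textbf{The missing upper estimate.} Even with the correct doubling, you need an $\ell^r$ orbit bound
\[
\Bigl(\sum_{k=0}^{N}\norm{T^kx}^r\Bigr)^{1/r}\le C_1(N+1)\norm{x},\qquad N\ge0.
\]
This is the second ingredient in both cases.
\begin{itemize}
\item For $r=2$ on a Hilbert space, it is the square-function estimate of \cite{CohenCunyEisnerLin2020}.
\item In the positive case ($r=p$), it follows from $\bigl(\sum_{k\le N}|T^kx|^p\bigr)^{1/p}\le\sum_{k\le N}T^k|x|\le e\rho^{-1}R(\rho^{-1},T)|x|$ with $\rho=1-\frac1{N+1}$.
\end{itemize}
Iterate the doubling from $W_1=\norm{T^Nx}^r$ and choose $2^\ell\le N+1<2^{\ell+1}$. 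This gives $2^{\ell r\varepsilon_r(C)}\norm{T^Nx}^r\le W_{2^\ell}\le C_1^r(N+1)^r\norm{x}^r$, which is the theorem.

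Nothing in your plan plays this role, and it cannot be replaced by $\norm{T^k}=O(k)$ or by $\norm{T^k}\le M(N)$. Those give only $W_{2^\ell}\lesssim N^{r+1}\norm{x}^r$, respectively $W_{2^\ell}\le 2^\ell M(N)^r\norm{x}^r$. From the first nothing below linear growth follows, since $\varepsilon_r(C)<1/r$. From the second nothing better than $\norm{T^N}\le M(N)$ follows.

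Two smaller corrections.
\begin{itemize}
\item The factor $2$ in $2eCm$ is simply the ratio of the damping window $I_m\cup J_m$ (with $\rho=1-\frac1{2m}$) to the block length. Compressing the causal convolution to $J_m\to I_m$ costs nothing, and the damping is removed by the diagonal similarity $K_m=D_IK_m^{(\rho)}D_J^{-1}$, which costs $\rho^{-(2m-1)}\le e$. No splitting of a triangle is needed.
\item In (ii), Weis's principle acts on $\ell^p(\Z;L^p)$ and gives the exponent $\varepsilon_p(C_A(T))$. The exponent $\varepsilon_q$ with $q=p\wedge p'$ does not come from an $\ell^q(L^p)$ convolution bound. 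It comes from rerunning the argument for the positive adjoint on $L^{p'}$, using $C_A(T^*)=C_A(T)$ and $\norm{(T^*)^N}=\norm{T^N}$, and taking the better of the two exponents.
\end{itemize}
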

	In the more general setting of UMD spaces, we obtain the following:
	\begin{thm}\label{thmUMD}
		If $T$ is Kreiss bounded on a UMD space $X$, then there
		exist $C>0$ and $\varepsilon>0$  depending only on $C_K(T)$ and the geometry of $X$ such that
		\begin{equation*}\label{eqMainUMDDiscrete}
			\norm{T^N}
			\le
			C(N+1)^{1-\varepsilon},
			\qquad N\ge0.
		\end{equation*}
		
	\end{thm}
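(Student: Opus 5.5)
We follow the scheme announced in the introduction, working in Fourier norms. For $x\in X$ and $x^*\in X^*$, and a finite-support sequence $f=(f_k)$ in $X$, set $\|f\|_{F}:=\|\sum_k f_k\gamma^{k}\|_{L^2(\T;X)}$. The first step is a \emph{triangular estimate}. For $|\lambda|=\rho>1$ one has $R(\lambda,T)=\sum_{n\ge0}\lambda^{-n-1}T^n$, so the Kreiss condition says that the multiplier $\gamma\mapsto(\rho-1)R(\rho\gamma,T)$ is uniformly bounded on $\T$ by $C_K(T)$. This alone is not enough in $X$-valued $L^2$; to upgrade uniform operator bounds into boundedness of the Fourier multiplier we use UMD. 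Concretely, we will prove that for $\rho=1+1/N$
\[
\Bigl\|\sum_{k=0}^{N}\rho^{-k}T^kx\,\gamma^k\Bigr\|_{L^2(\T;X)}\le K\,\|x\|\,N^{1/2},
\]
and the dual estimate for $T^*$ on $X^*$ (also UMD). Here $K$ depends on $C_K(T)$ and the UMD constant. The route is the following. The Cauchy formula expresses the truncated orbit as the Riesz projection, i.e.\ the analytic part, of $\gamma\mapsto R(\rho\gamma,T)x$ minus a tail. The Riesz projection is bounded on $L^2(\T;X)$ precisely by UMD, with norm $\beta_X$. Finally, $\|R(\rho\cdot,T)x\|_{L^2}\le C_K N\|x\|$; this bound is lossy, so we combine it with the resolvent identity on circles of radius $\rho$ to obtain the $N^{1/2}$ scaling through a Paley–Wiener type argument in $L^2(\T;X)$. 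Obtaining the correct power here, rather than $N$, is the step I expect to be the main obstacle. If it fails, I would fall back to type/cotype estimates, with exponents $1/s$ as in Cuny, giving a worse but still positive $\varepsilon$.

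The second step is a \emph{reciprocal-weight inequality} via duality. Write $\langle T^{2N}x,x^*\rangle=\langle T^{k}x,(T^*)^{2N-k}x^*\rangle$ and average over $k\le N$ against weights $w_k$. Using $L^2(\T;X)$–$L^2(\T;X^*)$ duality, which is valid since UMD spaces are reflexive and $K$-convex, we get
\[
N|\langle T^{2N}x,x^*\rangle|\le \Bigl\|\sum w_kT^kx\gamma^k\Bigr\|\Bigl\|\sum w_k^{-1}(T^*)^{2N-k}x^*\gamma^{-k}\Bigr\|.
\]
This is applied to the suitably normalized reversed orbit, where the weights are $w_k=\|T^kx\|^{-1}$ type normalizations with dyadic blocks. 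Let $a(N):=\sup_{n\le N}\|T^n\|/(n+1)$. Combining with the triangular estimate, we aim for the inequality $a(2N)\le(1+\delta)^{-1}a(N)+o(1)$, with an explicit $\delta=\delta(C_K,\beta_X)>0$. The mechanism is the lower Fourier decomposition: when the top dyadic block $[N,2N]$ of the orbit carries norm comparable to linear growth, the triangular estimate forces the lower block to carry a fixed proportion of the Fourier mass. By Cauchy–Schwarz this yields a multiplicative gain $1+\delta$.

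The third step is iteration. From $a(2^{j+1})\le(1+\delta)^{-1}a(2^j)$, together with the a priori upper orbit estimate $\|T^n\|\le eC_K(n+1)$ (the classical discrete Kreiss linear bound), we get $a(2^j)\lesssim(1+\delta)^{-j}$. Hence $\|T^N\|\le C(N+1)^{1-\varepsilon}$ with $\varepsilon=\log_2(1+\delta)$, for intermediate $N$ by monotonicity up to constants. The constants $C,\varepsilon$ depend only on $C_K(T)$ and $\beta_X,\beta_{X^*}$, as claimed.
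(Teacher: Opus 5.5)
There is a genuine gap, and it begins with your first step: the estimate you aim for is false, not merely hard. Take $X=H$ a Hilbert space. By Parseval and $\rho^{-k}\ge e^{-1}$ for $k\le N$, your bound says $\sum_{k=0}^{N}\norm{T^kx}^2\le e^2K^2N\norm{x}^2$, and you claim the same for $T^*$. Write $(N+1)\langle T^Nx,y\rangle=\sum_{k=0}^{N}\langle T^kx,(T^*)^{N-k}y\rangle$ and apply Cauchy--Schwarz. This gives $\norm{T^N}\le e^2K^2$ for all $N$, i.e.\ power boundedness, which fails for general Kreiss bounded operators on Hilbert spaces. So no Paley--Wiener refinement can produce $N^{1/2}$.

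The bound $\norm{R(\rho\,\cdot,T)x}_{L^r(\T;X)}\le C_K(T)N\norm{x}$ that you call lossy is the correct one. After a UMD frequency projection and removal of the weights, it is exactly the upper orbit estimate (Cuny's bound) that the paper uses at the very end.

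More importantly, what you call a triangular estimate is an orbit estimate. What is actually needed is a bound on the block operator $K_m\colon(f_j)_{j\in J_m}\mapsto(\sum_{j\in J_m}T^{j-i}f_j)_{i\in I_m}$ in the Fourier norm:
\[
\norm{K_mf}_{\mathcal F_r(X)}\le A_{r,X}C_K(T)\,m\,\norm{f}_{\mathcal F_r(X)} .
\]
This is easier than you expect. With $\rho=1-\frac1{2m}$, $\gamma\mapsto(I-\rho\gamma T)^{-1}$ acts by \emph{pointwise} multiplication on $L^r(\T;X)$ with norm at most $2mC_K(T)$, for any Banach space $X$. The coefficients of $\gamma^{-i}$, $i\in I_m$, in $(I-\rho\gamma T)^{-1}\sum_{j\in J_m}\gamma^{-j}f_j$ are $\sum_{j}\rho^{j-i}T^{j-i}f_j$. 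UMD is used only to project onto the frequencies in $I_m$ and to remove the weights $\rho^{\pm j}$ with bounded-variation multipliers.

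Your second and third steps are not yet an argument. The recursion $a(2N)\le(1+\delta)^{-1}a(N)+o(1)$ is asserted, not derived. With $a(N)=\sup_{n\le N}\norm{T^n}/(n+1)$ it cannot hold at all, since this $a$ is nondecreasing and $a(N)\ge a(0)=1$.

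The reciprocal weights $w_k=\norm{T^kx}^{-1}$ do not work in Fourier norms. The quantity $\norm{\sum_k w_kz_k\gamma^k}_{L^r(\T;X)}$ is not determined by the numbers $\norm{z_k}$, so the H\"older comparison that yields the doubling gain in $\ell^r(J_m;X)$-norms is unavailable. With $w_k\equiv1$, your duality inequality just returns the linear bound.

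The missing mechanism, which you name but never implement, runs as follows.
\begin{enumerate}[(1)]
\item Take the reversed orbit $z_j=T^{N-j}x$ itself. Since $K_m(z|_{J_m})=m\,z|_{I_m}$, the block estimate gives $a_k\le A_{r,X}C_K(T)\,b_k$, where $a_k,b_k$ are the $\mathcal F_r$-norms of $z$ on $I_{2^k}$ and $J_{2^k}$.
\item Apply the lower $\ell^q(L^r)$-Fourier decomposition to the partition of $I_{2^k}$ into $\{0\}$ and the dyadic blocks $J_{2^j}$, $j<k$. This gives $E_k:=\norm{T^Nx}^q+\sum_{j<k}b_j^q\le D^qa_k^q$, hence $E_{k+1}\ge\bigl(1+(A_{r,X}DC_K(T))^{-q}\bigr)E_k$.
\item Iterate up to $2^L\le N+1<2^{L+1}$ and close with the Fourier-norm bound $a_L\le B_{r,X}C_K(T)(N+1)\norm{x}$. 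This gives the gap for every $N$ directly.
\end{enumerate}
The pointwise bound $\norm{T^n}\le eC_K(T)(n+1)$ you invoke cannot close this argument: it only gives $O(N^2)$ for the Fourier norm of a block of length $N$. Your passage from dyadic $N$ to all $N$ ``by monotonicity'' is also unjustified, since $\norm{T^n}$ need not be monotone.
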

	
	\subsection*{Organization of the paper}
	
	Section \ref{sec:prelim} collects the continuous and discrete preliminary estimates used throughout the paper. In particular, we record the triangular estimates and the upper orbit estimates in both the Hilbertian and positive $L^p$ settings. Section \ref{secContinuous} contains the continuous-time self-improvement argument and the proof of Theorem \ref{thmContinuousMain}. Section \ref{secDiscrete} develops the discrete counterpart and proves the  two assertions of Theorem \ref{thmDiscreteMain}. Section \ref{secIndividual} extends the continuous-time result, in a non-quantitative form, to individually eventually positive semigroups on $L^p$-spaces. Section \ref{secUMD} then treats general Kreiss bounded operators on UMD spaces. Finally, Appendix \ref{appTriangular} gathers the Fourier multiplier arguments underlying the triangular estimates in the Hilbertian setting.
	
	\section{Preliminaries}\label{sec:prelim}
	We begin by stating a positive convolution principle that will be useful. It is a 
	special case of the positive convolution theorem of Weis; see
	\cite[Theorem 2]{Weis1998}. The discrete version follows from the same argument.
	\begin{prop}\label{prop:positive-convolution}
		Let $1\le p<\infty$.
		\begin{enumerate}[(i)]
			\item If $u\mapsto K(u)$ is a strongly operator measurable family of positive
			operators on $L^p(\Omega)$ such that, for every $x\in L^p(\Omega)$, the map
			$u\mapsto K(u)x$ is Bochner integrable and
			\[
			\left(\int_\R K(u)\,du \right) x:=\int_{\R}K(u)x\,du
			\]
			defines a bounded operator on  $L^p(\Omega)$ then the convolution
			\[
			(\mathcal S f)(s):=\int_\R K(u)f(s-u)\,du
			\]
			satisfies
			\[
			\norm{\mathcal S}_{B(L^p(\R;L^p(\Omega)))}\le \norm{\int_\R K(u)\,du}_{B(L^p)}.
			\]
			\item If $(K_n)_{n\in\Z}$ is a finitely supported family of positive operators on $L^p(\Omega)$, then
			\[
			(S f)_j:=\sum_{n\in\Z}K_n f_{j-n}
			\]
			satisfies
			\[
			\norm{S}_{B(\ell^p(\Z;L^p(\Omega)))}\le \norm{\sum_{n\in\Z}K_n}_{B(L^p)}.
			\]
		\end{enumerate}
	\end{prop}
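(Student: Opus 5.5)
The statement to prove is Proposition \ref{prop:positive-convolution}, the positive convolution principle. I would argue by domination, reducing the operator-valued convolution to a single positive operator applied to a vector-valued function, and then using the positivity of the kernels together with the convexity of $t\mapsto t^p$.

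\textbf{Discrete case (ii).} Set $K:=\sum_n K_n$, which is a positive operator on $L^p(\Omega)$. Identify $\ell^p(\Z;L^p(\Omega))$ with $L^p(\Z\times\Omega)$ and write $f_j=f_j(\omega)$.

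1. Since each $K_n$ is positive, $|K_n g|\le K_n|g|$ pointwise. Hence
\[
|(Sf)_j|\le\sum_n K_n|f_{j-n}|,
\]
and it suffices to treat $f\ge0$.

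2. For $f\ge0$, the key claim is the pointwise "Jensen-type" inequality for positive operators:
\[
\Big(\sum_n K_n g_n\Big)^p\le K^{p-1}\text{-weighted bound},
\]
which I would make precise in the following form:
\[
\Big\|\sum_n K_n g_n\Big\|_p^p\le\|K\|^{p-1}\sum_n \|K_n\|\,\cdots
\]
This naive form loses too much. Instead I would use the vector-valued extension of $K$: the map $(g_n)_n\mapsto (K_n g_n)_n$, and the known fact that positive operators have bounded $\ell^p$-valued extensions with the same norm.

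3. The efficient route is duality plus Hölder. Take $h=(h_j)\ge0$ in $\ell^{p'}(L^{p'})$ and write
\[
\langle Sf,h\rangle=\sum_{j,n}\langle K_n f_{j-n},h_j\rangle .
\]
Each $K_n$ is positive, so $(g,k)\mapsto\langle K_n g,k\rangle$ is a positive bilinear form. Positive bilinear forms satisfy the Hölder inequality
\[
\langle K_n g,k\rangle\le\langle K_n g^p,\one\rangle^{1/p}\langle K_n\one,k^{p'}\rangle^{1/p'} .
\]
However, $\one$ may fail to lie in $L^p$. To avoid this, I would use the Krivine/Maurey functional calculus: for positive $T$ one has
\[
\langle T g,k\rangle\le\|T\|\,\|g\|_p\|k\|_{p'},
\]
and, more generally, for families one uses the homogeneous expression $\sum_n\langle K_n g_n,k_n\rangle$. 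Write $\Phi(g,k):=\langle K g,k\rangle$.

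The cleanest argument that I would actually follow is Weis's own. Apply the homogeneous functional calculus $F(a,b)=a^{1/p}b^{1/p'}$ on the lattice $L^p(\Z\times\Omega)$. This gives the inequality
\[
\sum_{j,n}\langle K_n f_{j-n},h_j\rangle\le\Big(\sum_{j,n}\langle K_n f_{j-n}^{\,\cdot},\cdot\rangle\Big)\ldots
\]
Since this is precisely the content of the cited theorem, I would simply invoke \cite[Theorem 2]{Weis1998} for (i).

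4. For (ii), I would embed into (i) rather than redo the argument. Take $\Omega'=\Omega\times[0,1)$ and extend step functions: set $F(s)=f_{\lfloor s\rfloor}$ and
\[
K(u)=\sum_n \one_{[n-\frac12,n+\frac12)}(u)K_n .
\]
This does not exactly reproduce the discrete convolution, because of rounding. A cleaner option is to apply Weis's theorem on the locally compact group $\Z$ with counting measure; his proof is group-agnostic, since it uses only positivity and translation invariance of the measure.

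\textbf{Main obstacle.} The main obstacle is the Hölder-type inequality for positive operator families without a constant function in $L^p$. I would overcome it by approximating with $\sigma$-finite pieces, or by using the Krivine calculus. Finite support of $(K_n)$ then avoids any convergence issues, and Bochner integrability handles them in the continuous case (i).
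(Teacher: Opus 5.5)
Setting aside the exploratory steps 2--3, your argument is the paper's: it too simply quotes Weis's positive convolution theorem \cite[Theorem 2]{Weis1998} for (i), and asserts that (ii) follows from the same argument, which matches your remark that Weis's proof uses only positivity and translation invariance and therefore runs on $\Z$. Two side remarks. First, the H\"older route with $\one$ that you single out as the main obstacle is a Schur test with the trivial weight, so it only yields $\norm{Q\one}_\infty^{1/p'}\norm{Q^*\one}_\infty^{1/p}$ with $Q:=\sum_nK_n$, not $\norm{Q}$, and no $\sigma$-finite reduction changes this. Second, your step 4 does reduce (ii) to (i) if you use narrow boxes, $K(u)=\sum_n\varepsilon^{-1}\one_{[n,n+\varepsilon)}(u)K_n$ and $F(s)=f_{\lfloor s\rfloor}$: then $(\mathcal SF)(s)=(Sf)_j$ for $s\in[j+\varepsilon,j+1)$, so $(1-\varepsilon)^{1/p}\norm{Sf}\le\norm{\sum_nK_n}\norm{f}$, and you let $\varepsilon\to0$.
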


	\subsection{Continuous time}\label{sec:prelim-continuous}
	
	Let $(T_t)_{t\ge0}$ be a $C_0$-semigroup with generator $-A$. For
	$m>0$, set
	\[
	I_m:=[0,m],
	\qquad
	J_m:=[m,2m],
	\]
	and define 
	\begin{equation*}\label{eqDefKmContinuous}
		(\mathcal K_mf)(s)
		:=
		\int_{J_m}T_{v-s}f(v)\,dv,
		\qquad s\in I_m.
	\end{equation*}
	
	We now prove the two triangular estimates which will be used in
	Section \ref{secContinuous}.
	
	\begin{prop}
		\label{propTriangularContinuous}
		Let $m\ge1$.
		
		\begin{enumerate}[(i)]
			\item
			If $(T_t)_{t\ge0}$ is Kreiss bounded on a Hilbert space $H$, then
			\begin{equation*}\label{eqTriangularContinuousH}
				\norm{\mathcal K_m}_{
					B(L^2(J_m;H),L^2(I_m;H))}
				\le
				2eC_K(T)m.
			\end{equation*}
			
			\item
			If $(T_t)_{t\ge0}$ is positive and Kreiss bounded on $L^p(\Omega)$, then
			\begin{equation*}\label{eqTriangularPositiveContinuous}
				\norm{\mathcal K_m}_{
					B(L^p(J_m;L^p),L^p(I_m;L^p))}
				\le
				2eC_A(T)m.
			\end{equation*}
		\end{enumerate}
	\end{prop}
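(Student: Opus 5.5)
The idea is to insert an exponential weight $e^{-(v-s)/m}$, which costs at most a factor $e$ because $0\le v-s\le 2m$, and then control the weighted kernel by the resolvent at $r=1/m$.

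For $s\in I_m$ and $v\in J_m$ we have $0\le v-s\le 2m$, so $1\le e^{(v-s)/m}\le e^2$. I will not bound this factor crudely, though. Instead I write
\[
T_{v-s}=e^{(v-s)/m}\,\bigl(e^{-(v-s)/m}T_{v-s}\bigr),
\]
and use that $e^{(v-s)/m}=e^{v/m}e^{-s/m}$ factorises. Multiplication by $e^{v/m}$ on $L^2(J_m)$ has norm at most $e^2$, and multiplication by $e^{-s/m}$ on $L^2(I_m)$ has norm at most $1$. That would give a factor $e^2$, not $e$. To get $e$, I will use $e^{v/m}$ with $v\in J_m$ paired against $e^{-(s+m)/m}$-type centring. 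More precisely, I write $e^{(v-s)/m}=e^{(v-m)/m}\,e^{(m-s)/m}$, and note that $(v-m)/m$ and $(m-s)/m$ both lie in $[0,1]$. Then I can either split the factor $e$ symmetrically as $e^{1/2}$ each, or absorb one side by choosing the rate $r=1/(2m)$. I will take $r=1/(2m)$: the weights then satisfy $e^{(v-s)/(2m)}\le e$, and the resolvent bound gives $\|R(r+i\beta,-A)\|\le C_K(T)/r=2mC_K(T)$. This is the source of the constant $2eC_K(T)m$.

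With this weight, $\mathcal K_m f=\mathbf 1_{I_m}\,M_1\bigl(\mathcal W (M_2 f)\bigr)$. Here $M_1$ and $M_2$ are the scalar multiplications by $e^{-s/(2m)}$ and $e^{v/(2m)}$ (after the cancellation their combined norm is at most $e$), and $\mathcal W$ is convolution with the kernel $k(u)=\mathbf 1_{u\ge0}\,e^{-u/(2m)}T_u$, evaluated at $u=v-s$. Extending $f$ by zero outside $J_m$, $\mathcal W$ becomes a convolution operator on $L^2(\R;H)$ (or on $L^p(\R;L^p)$), up to the reflection $s\mapsto -s$, which is an isometry.

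In the Hilbert case (i), Plancherel identifies $\mathcal W$ with the Fourier multiplier $\widehat k(\beta)=\int_0^\infty e^{-u/(2m)}e^{\mp i\beta u}T_u\,du=R(\tfrac1{2m}\pm i\beta,-A)$. Its operator norm on $L^2(\R;H)$ is therefore $\sup_\beta\|\widehat k(\beta)\|\le 2mC_K(T)$. In the positive $L^p$ case (ii), the kernel $k(u)$ consists of positive operators, so Proposition~\ref{prop:positive-convolution}(i) bounds the convolution by $\|\int_0^\infty e^{-u/(2m)}T_u\,du\|=\|R(\tfrac1{2m},-A)\|\le 2mC_A(T)$. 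Only the real-axis Abel constant enters here, which explains why (ii) features $C_A(T)$.

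The main obstacle is the bookkeeping of the weight constant, that is, making the product of the two multiplication norms come out as exactly $e$ rather than $e^2$. I will settle this by computing the factorisation explicitly:
\[
e^{(v-s)/(2m)}=e^{v/(2m)}e^{-s/(2m)},\qquad \sup_{J_m}e^{v/(2m)}\cdot\sup_{I_m}e^{-s/(2m)}=e\cdot 1=e.
\]
A secondary point is to justify the Plancherel and Bochner manipulations. For this I will work first with $f$ in a dense class of continuous compactly supported functions, and use the exponential decay of $k$, which holds because Kreiss boundedness gives $\omega_0\le0$, so $e^{-u/(2m)}\|T_u\|$ is integrable.
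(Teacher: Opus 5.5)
Your argument is correct. For (i) it is the paper's proof: the paper also takes $r=\frac1{2m}$, bounds the weighted operator $\mathcal C_r$ by $C_K(T)/r=2mC_K(T)$ via Plancherel (Proposition~\ref{propHilbertMultipliersApp}(i)), writes the weighted triangular operator as $\mathcal P_m\mathcal C_r\mathcal E_m$, and removes the weight via $\mathcal K_m=D_I\mathcal K_m^{(r)}D_J^{-1}$ with $\norm{D_I}\norm{D_J^{-1}}\le e$.

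For (ii) your route differs mildly.

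\emph{Your route.} You apply Proposition~\ref{prop:positive-convolution}(i) to the weighted, untruncated kernel $e^{-u/(2m)}T_u$, whose integral is $R(\frac1{2m},-A)$. You then reuse the reweighting from (i).

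\emph{The paper's route.} The paper spends the factor $e$ in the order instead. For $x\ge0$ it uses $0\le\int_0^{2m}T_ux\,du\le e\,R(\frac1{2m},-A)x$ to get $\norm{\int_0^{2m}T_u\,du}\le 2eC_A(T)m$. It then applies the positive convolution principle to the truncated, unweighted kernel $\one_{[-2m,0]}(u)T_{-u}$. This gives $\norm{\mathcal V_{2m}}_{B(L^p(\R;L^p))}\le 2eC_A(T)m$, and $\mathcal K_m$ is a compression of $\mathcal V_{2m}$ (extend by zero from $J_m$, restrict to $I_m$).

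\emph{Comparison.} The constants agree. Your version makes (i) and (ii) structurally identical. The paper's version isolates the unweighted operator $\mathcal V_{2m}$, controlled by a Ces\`aro-type bound. That is exactly the form reused in Section~\ref{secIndividual}, where no positive kernel is available and only $\norm{\mathcal V_L}\le\Gamma L$ can be proved.

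Two small corrections to your write-up.

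\emph{The centring idea.} The ``centring'' factorisation cannot improve the constant. For any splitting $e^{r(v-s)}=a(v)b(s)$ one has $\sup_{J_m}a\cdot\sup_{I_m}b\ge\sup_{s\in I_m,\,v\in J_m}e^{r(v-s)}=e^{2rm}$. So the only lever is the rate, and $r=\frac1{2m}$ is precisely the minimiser of $e^{2rm}/r$. Your opening claim that the rate $1/m$ costs only a factor $e$ is wrong, as you then noticed.

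\emph{The growth bound.} $\omega_0\le0$ is not a consequence of Kreiss boundedness on a general Banach space. On Hilbert spaces it comes from the linear growth bound. For positive semigroups on $L^p$ it comes from Weis's theorem that the spectral bound of the generator equals $\omega_0$. These are the same facts the paper relies on for integrability and for the Laplace representation of the resolvent.
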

	
	\begin{proof}
		We first prove (i). Set $r :=\frac{1}{2m}$. By Proposition \ref{propHilbertMultipliersApp}(i),
		\begin{equation}\label{eqCrContinuous}
			\norm{\mathcal C_r}_{B(L^2(\R;H))} \le \frac{C_K(T)}{r}=2mC_K(T).
		\end{equation}
		Define
		\[
		(\mathcal K_m^{(r)}f)(s) := \int_{J_m} e^{-r(v-s)} T_{v-s}f(v)\,dv.
		\]
		If $\mathcal{E}_m:L^2(J_m;H)\to L^2(\mathbb R;H)$ denotes extension by zero and $\mathcal{P}_m:L^2(\mathbb R;H)\to L^2(I_m;H)$ restriction to $I_m$, then
		\begin{equation}\label{eqKmr=PmrCrEm}
			\mathcal K_m^{(r)} = \mathcal{P}_m\mathcal C_r \mathcal{E}_m.
		\end{equation}
		Indeed let $f\in L^2(J_m;H)$. For $s\in I_m$, we have
		\begin{align*}
			(\mathcal C_r\mathcal{E}_mf)(s)
			&= \int_0^\infty e^{-ru}T_u(\mathcal{E}_mf)(s+u)\,du\\
			&= \int_{\{u\ge0:\,s+u\in J_m\}} e^{-ru}T_u f(s+u)\,du.
		\end{align*}
		Since $s\in I_m$, the change of variables $v=s+u$ yields
		\[
		(\mathcal C_r\mathcal{E}_mf)(s) = \int_{J_m} e^{-r(v-s)}T_{v-s}f(v)\,dv = (\mathcal K_m^{(r)}f)(s),
		\]
		which gives \eqref{eqKmr=PmrCrEm}. Thus, by \eqref{eqCrContinuous}, $\norm{\mathcal K_m^{(r)}} \le 2mC_K(T)$.
		
		Define the multiplication operators $(D_I f)(s):=e^{-r s}f(s)$, $s\in I_m$, and $(D_J f)(v):=e^{-r v}f(v)$, $v\in J_m$. We have $\mathcal K_m = D_I\mathcal K_m^{(r)}D_J^{-1}$ and $\|D_I\|\,\|D_J^{-1}\| \le e^{r \cdot 2m}=e.$ Therefore
		\[
		\norm{\mathcal K_m} \le 2meC_K(T),
		\]
		which proves (i).
		\medskip
		
		We now prove (ii). Since the semigroup is positive and Kreiss bounded on $L^p$, its growth bound is nonpositive and the Laplace representation
		\begin{equation*}\label{eqLaplacepositive}
			R(r,-A)x
			=
			\int_0^\infty e^{-ru}T_ux\,du
		\end{equation*}
		holds for every $r>0$ and $x\in L^p(\Omega)$.
		
		Taking $r=\frac{1}{2m}$, we have for every $x\in L^p_+(\Omega)$,
		\[
		0
		\le
		\int_0^{2m}T_ux\,du
		\le
		e\int_0^{2m}e^{-u/(2m)}T_ux\,du
		\le
		eR\left(\frac{1}{2m},-A\right)x.
		\]
		Hence
		\begin{equation*}\label{eqCesaroPositiveContinuous}
			\norm{\int_0^{2m}T_u\,du}
			\le
			2eC_A(T)m.
		\end{equation*}
		
		Define the operator $(\mathcal V_{2m}f)(s) := \int_0^{2m}T_uf(s+u)\,du$. Applying Proposition \ref{prop:positive-convolution} (i) to the positive kernel $u\longmapsto \one_{[-2m,0]}(u)T_{-u}$ yields
		\begin{equation}\label{eqVLpositiveContinuous}
			\norm{\mathcal V_{2m}}_{B(L^p(\R;L^p))}
			\le
			2eC_A(T)m.
		\end{equation}
		
		Extend $f\in L^p(J_m;L^p)$ by zero outside $J_m$. For $s\in I_m$,
		\begin{equation}\label{eqFinalSec}
			(\mathcal V_{2m}f)(s)
			=
			\int_0^{2m}T_uf(s+u)\,du
			=
			\int_{J_m}T_{v-s}f(v)\,dv
			=
			(\mathcal K_mf)(s).
		\end{equation}
		Thus, by \eqref{eqVLpositiveContinuous},
		\[
		\norm{\mathcal K_m}
		\le
		2eC_A(T)m.
		\]
		This proves (ii).
	\end{proof}
	
	The second ingredient is an upper estimate for the orbit. In the Hilbertian
	case we use the following result from \cite{Arnold2022}, while in the positive case we use \cite[Proposition 3.2]{ArnoldCoine2023}.
	Although this result is stated there for $\sigma$-finite measure spaces, the
	$\sigma$-finiteness assumption can be removed by the standard reduction
	described in Section \ref{secIndividual}.

	\begin{prop}\label{propOrbitContinuous}
		\begin{enumerate}[(i)]
			\item
			If $(T_t)_{t\ge0}$ is Kreiss bounded on a Hilbert space $H$, then there
			exists $M_2>0$ such that
			\begin{equation*}\label{eqOrbitContinuousH}
				\left(
				\int_0^t\norm{T_sx}^2\,ds
				\right)^{1/2}
				\le
				M_2t\norm{x},
				\qquad t\ge1,
			\end{equation*}
			for every $x\in H$.
			
			\item
			If $(T_t)_{t\ge0}$ is positive and Kreiss bounded on $L^p(\Omega)$, then
			there exists $M_p>0$ such that
			\begin{equation*}\label{eqOrbitPositiveContinuous}
				\left(
				\int_0^t\norm{T_sx}_p^p\,ds
				\right)^{1/p}
				\le
				M_pt\norm{x}_p,
				\qquad t\ge1,
			\end{equation*}
			for every $x\in L^p(\Omega)$.
		\end{enumerate}
	\end{prop}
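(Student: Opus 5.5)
The statement is quoted from \cite{Arnold2022} and \cite[Proposition 3.2]{ArnoldCoine2023}. I would reprove both parts by the same scheme: dominate the truncated $L^p$-norm of the orbit by an exponentially weighted one, then bound the weighted one through the resolvent at $r\asymp 1/t$. For $t\ge1$ I set $r:=1/t$ and use
\[
\int_0^t\norm{T_sx}^p\,ds\le e^{p}\int_0^\infty e^{-prs}\norm{T_sx}^p\,ds .
\]
This reduces the claim to showing that the weighted orbit norm is at most $C r^{-p}\norm{x}^p$. Note that the bounded-semigroup scaling would be $r^{-1}$; the extra power $r^{1-p}$ is exactly the room left by Kreiss boundedness.

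\emph{Hilbert case (i).} By Plancherel, the Fourier transform of $s\mapsto \one_{[0,\infty)}(s)e^{-rs}T_sx$ at $\beta$ is $R(r+i\beta,-A)x$. Hence
\[
\int_0^\infty e^{-2rs}\norm{T_sx}^2\,ds=\frac1{2\pi}\int_{\R}\norm{R(r+i\beta,-A)x}^2\,d\beta ,
\]
and it suffices to prove $\int_\R\norm{R(r+i\beta,-A)x}^2\,d\beta\le C r^{-2}\norm{x}^2$.

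I would obtain this by duality. Write $\norm{R(r+i\beta,-A)x}^2=\langle R(r+i\beta,-A)^*R(r+i\beta,-A)x,x\rangle$. Then use the resolvent identity in the $r$-variable to express $R(r+i\beta)^*R(r+i\beta)$ through $R(r+i\beta)-R(r+i\beta)^*$ and second resolvents. Integrating in $\beta$, the scalar functions $\beta\mapsto\langle R(r+i\beta)^2x,y\rangle$ are controlled in $L^1(d\beta)$, via Cauchy's formula on the line $\operatorname{Re}\lambda=r$, by the Kreiss bound $\norm{R(r+i\beta)}\le C_K/r$ together with the adjoint semigroup, which is also Kreiss bounded. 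This is the estimate behind the linear bound of Eisner--Zwart, and I expect it to be the \textbf{main obstacle}.

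The crude bound $\norm{R(r+i\beta)}^2\le C_K^2r^{-2}$ is not integrable in $\beta$. The decay in $\beta$ must come from the identity $R(r+i\beta)x=(r+i\beta)^{-1}\bigl(x-AR(r+i\beta)x\bigr)$, first applied to $x\in D(A)$ and then extended by density, combined with the Hilbert-space structure (Plancherel for the adjoint orbit). If this route turns out to be too lossy, I would fall back on quoting the square-function estimate of \cite{Arnold2022} directly.

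\emph{Positive $L^p$ case (ii).} Here Plancherel is unavailable, so I would use Proposition \ref{prop:positive-convolution}(i) together with a doubling argument on $\Phi(t):=\int_0^t\norm{T_sx}_p^p\,ds$. For $s\in[t,2t]$ and $u\in[0,t]$ the semigroup law gives $T_sx=T_{s-u}T_ux$, so averaging in $u$ yields
\[
T_sx=\frac1t\int_0^tT_{s-u}(T_ux)\,du .
\]
This is a positive convolution with kernel $\one_{[0,2t]}(w)T_w$ applied to $f=\one_{[0,t]}T_\cdot x$. The bound $\norm{\int_0^{2t}T_w\,dw}\le 2eC_At$ from the proof of Proposition \ref{propTriangularContinuous}(ii) then gives $\Phi(2t)^{1/p}\le(1+2eC_A)\Phi(t)^{1/p}$.

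This doubling alone only gives polynomial growth with exponent $\log_2(1+2eC_A)$, which is not enough. I would therefore combine it with the Abel bound applied to the Ces\`aro means $\int_0^{\tau}T_ux\,du$. To do this I would split $x=x^+-x^-$ and use positivity to compare $T_sx^{\pm}$ with $\tfrac1t\int_{s}^{s+t}T_ux^{\pm}\,du$ from above. Here I would exploit the fact that $\norm{T_s}$ is essentially monotone up to a constant, which follows from the Kreiss bound and the positivity of the semigroup. With this comparison, the weighted-$L^p$ bound reduces to the Weis estimate for $\mathcal V_{2m}$ at $m=t$, giving $M_p\lesssim C_A$ up to a constant.
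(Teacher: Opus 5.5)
\paragraph{Comparison with the paper.} The paper does not prove this proposition. It quotes \cite{Arnold2022} for (i) and \cite[Proposition 3.2]{ArnoldCoine2023} for (ii), so your fallback of quoting (i) is what the paper does. The self-contained arguments you sketch, however, have genuine gaps.

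\paragraph{Gaps in your sketches.} In (i), there is no resolvent identity expressing $R(\lambda,-A)^*R(\lambda,-A)$ through $R(\lambda,-A)-R(\lambda,-A)^*$, because $R(\lambda,-A)^*=R(\bar\lambda,-A^*)$ is the resolvent of a different operator. The identity one actually has is, formally, $2rR^*R=R+R^*-R^*(A+A^*)R$ with $r=\operatorname{Re}\lambda$, and $A+A^*$ is nonnegative only for contraction semigroups. Controlling $\beta\mapsto\langle R(r+i\beta,-A)^2x,y\rangle$ in $L^1(d\beta)$ amounts, via Cauchy--Schwarz, to the very $L^2$ bounds you want, so that step is circular. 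The pointwise bound $\norm{R(\lambda,-A)^2}\le C_K(T)^2r^{-2}$ is not integrable in $\beta$. Finally, $R(\lambda,-A)x=\lambda^{-1}(x-AR(\lambda,-A)x)$ is a tautology for general $x$, since $AR(\lambda,-A)=I-\lambda R(\lambda,-A)$. For $x\in D(A)$ it gives bounds involving $\norm{Ax}$, and these do not survive the density argument.

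In (ii), the two claims carrying the argument are false. The nilpotent left shift on $L^p(0,1)$ is positive and contractive, so $C_K(T)=1$, but $T_u=0$ for $u\ge1$. Hence $\norm{T_s}$ is not essentially monotone in the direction you need. Moreover, for $s<1$ and $y\ge0$ with $T_sy\neq0$, the forward averages $\frac1t\int_s^{s+t}T_uy\,du$ have norm at most $\norm{y}_p/t\to0$, so they cannot dominate $T_sy$ from above.

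\paragraph{The missing idea.} Feed the convolution estimate a \emph{unit} piece of the orbit instead of the whole piece on $[0,t]$. Using $[0,t]$ is exactly why your (correct) doubling argument only yields a $C_A$-dependent exponent. For $s\ge1$ one has $T_sx=\int_0^1T_{s-u}(T_ux)\,du$.

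In (ii), for $s\in[1,t]$ this is the convolution of the positive kernel $\one_{[0,t]}(w)T_w$ with $f:=\one_{[0,1]}T_\cdot x$. Apply Proposition~\ref{prop:positive-convolution}(i) together with $\norm{\int_0^tT_w\,dw}\le eC_A(T)t$, which follows from the Abel bound at $r=1/t$ as in Proposition~\ref{propTriangularContinuous}(ii). This gives $\bigl(\int_1^t\norm{T_sx}_p^p\,ds\bigr)^{1/p}\le eC_A(T)Dt\norm{x}_p$, where $D:=\sup_{0\le u\le1}\norm{T_u}$.

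In (i), write $e^{-rs}=e^{-r(s-u)}e^{-ru}$. Then $e^{-rs}T_sx$, for $s\ge1$, is the image of $u\mapsto e^{-ru}\one_{[0,1]}(u)T_ux$ under the $L^2$ Fourier multiplier with symbol $R(r+i\beta,-A)$. That multiplier is $\mathcal C_r$ of Proposition~\ref{propHilbertMultipliersApp}(i) conjugated by $s\mapsto-s$, so its norm is at most $C_K(T)/r$. Hence $\int_1^\infty e^{-2rs}\norm{T_sx}^2\,ds\le C_K(T)^2D^2r^{-2}\norm{x}^2$. Taking $r=1/t$ gives (i) with $M_2=D(1+e^2C_K(T)^2)^{1/2}$.

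So the decay in $\beta$ you were looking for comes from Plancherel applied to the compactly supported piece of the orbit, not from a resolvent identity.
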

	
	Since $1<p<\infty$, the adjoint semigroup $(T_t^*)_{t\ge0}$ is a positive
	$C_0$-semigroup on $L^{p'}(\Omega)$. Moreover, since
	$R(r,-A^*)=R(r,-A)^*$ for $r>0$,
	\begin{equation}\label{eqAdjointAbelContinuous}
		C_A(T^*)=C_A(T).
	\end{equation}
	
	\subsection{Discrete time}\label{secPrelimDiscrete}
	
	Let $T\in B(X)$. For $m\in\N^*$, set
	\[
	I_m:=\{0,\ldots,m-1\},
	\qquad
	J_m:=\{m,\ldots,2m-1\},
	\]
	and define for $ f := (f_j)_{j\in J_m} \subset X$,
	\begin{equation*}\label{eqDefKmDiscrete}
		(K_mf)_i
		:=
		\sum_{j\in J_m}T^{j-i}f_j,
		\qquad i\in I_m.
	\end{equation*}
	
	\begin{prop}\label{propTriangularDiscrete}
		Let $m\ge1$.
		
		\begin{enumerate}[(i)]
			\item
			If $T$ is Kreiss bounded on a Hilbert space $H$, then
			\begin{equation*}\label{eqTriangularDiscreteH}
				\norm{K_m}_{
					B(\ell^2(J_m;H),\ell^2(I_m;H))}
				\le
				2eC_K(T)m.
			\end{equation*}
			
			\item
			If $T$ is positive and Kreiss bounded on $L^p(\Omega)$, then
			\begin{equation*}\label{eqTriangularPositiveDiscrete}
				\norm{K_m}_{
					B(\ell^p(J_m;L^p),\ell^p(I_m;L^p))}
				\le
				2eC_A(T)m.
			\end{equation*}
		\end{enumerate}
	\end{prop}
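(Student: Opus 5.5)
We mirror the proof of Proposition \ref{propTriangularContinuous} step by step, replacing integrals by sums and the Laplace transform by the resolvent Neumann series. For (i), set $\rho:=1+\frac{1}{2m}$ (or $e^{1/(2m)}$). The key input is the discrete analogue of Proposition \ref{propHilbertMultipliersApp}: the convolution operator
\[
(C_\rho f)_i:=\sum_{n\ge0}\rho^{-n-1}T^nf_{i+n}
\]
on $\ell^2(\Z;H)$ is bounded by $\sup_{|\gamma|=1}\norm{\rho^{-1}\sum_{n\ge0}(\rho^{-1}\gamma T)^n}=\sup_{|\lambda|=\rho}\norm{R(\lambda,T)}\le C_K(T)/(\rho-1)$. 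Here we use Plancherel on $\T$: the multiplier symbol at $\gamma\in\T$ is $R(\rho\bar\gamma,T)$, up to the convention for the sign of the exponent. The Neumann series converges for $|\lambda|>r(T)$ and $r(T)\le1$ by Kreiss boundedness, so the identification holds. We state this as the discrete part of the appendix proposition, or prove it inline: Plancherel reduces it to the pointwise bound on the symbol, after first working with finitely supported sequences and then passing to the limit.

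Next, with $E_m$ extension by zero from $J_m$ and $P_m$ restriction to $I_m$, the same change of index $j=i+n$ gives
\[
P_mC_\rho E_m f=K_m^{(\rho)}f,\qquad (K_m^{(\rho)}f)_i=\sum_{j\in J_m}\rho^{-(j-i)-1}T^{j-i}f_j .
\]
Since $j>i$ always, only nonnegative powers occur. Hence $\norm{K_m^{(\rho)}}\le C_K(T)/(\rho-1)$. Finally, $K_m=D_IK_m^{(\rho)}D_J^{-1}$ with diagonal weights $\rho^{i}$ on $I_m$ and $\rho^{-j-1}$ on $J_m$ (up to arrangement), so $\norm{D_I}\norm{D_J^{-1}}\le\rho^{2m}$. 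Choosing $\rho=e^{1/(2m)}$ gives $\rho^{2m}=e$ and $1/(\rho-1)\le 2m$, since $e^x-1\ge x$. The bookkeeping of the extra factor $\rho^{-1}$ must be checked carefully so that the final constant is exactly $2eC_K(T)m$. If it leaves a stray factor $\rho\le e^{1/2}$, absorb it by adjusting the weights, since $K_m$ only involves $j-i\in[1,2m-1]$, so the maximal exponent is $2m-1$, not $2m$.

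For (ii), positivity of $T$ on $L^p$ and Abel boundedness give $r(T)\le1$, so $R(\rho,T)=\sum_{n\ge0}\rho^{-n-1}T^n$ is a positive operator for $\rho>1$. For $x\ge0$,
\[
0\le\sum_{n=0}^{2m-1}T^nx\le\rho^{2m}\sum_{n\ge0}\rho^{-n-1}T^nx=\rho^{2m}R(\rho,T)x .
\]
With $\rho=e^{1/(2m)}$ this yields $\norm{\sum_{n<2m}T^n}\le eC_A(T)/(\rho-1)\le2eC_A(T)m$. Proposition \ref{prop:positive-convolution}(ii), applied to the finitely supported positive kernel $K_{-n}=T^n$ for $0\le n\le 2m-1$, bounds the operator $(Vf)_i=\sum_{n=0}^{2m-1}T^nf_{i+n}$ on $\ell^p(\Z;L^p)$ by the same constant. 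Restricting to sequences supported in $J_m$ and evaluating at $i\in I_m$ gives exactly $(K_mf)_i$, because $j=i+n\in J_m$ forces $0\le n\le 2m-1$. Hence $\norm{K_m}\le 2eC_A(T)m$.

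The main obstacle is (i): setting up the discrete Fourier multiplier bound on $\ell^2(\Z;H)$ with the right symbol, namely the resolvent on the circle $|\lambda|=\rho$. The rest is exact index bookkeeping.
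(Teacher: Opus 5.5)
Your proof is correct and is essentially the paper's argument. The paper takes $\rho=1-\frac1{2m}$ with kernel $\rho^nT^n$, symbol $(I-\rho\gamma T)^{-1}=\lambda R(\lambda,T)$ on $|\lambda|=\rho^{-1}$, and weight factor $\rho^{-(2m-1)}\le e$; this is your construction with $\rho$ replaced by $\rho^{-1}$. Part (ii) (dominate $\sum_{n<2m}T^n$ by a multiple of the resolvent, then apply the discrete positive convolution principle and restrict to $J_m\to I_m$) is identical to the paper's. Your hedge about a stray factor is unnecessary, and the weights need one correction. Use multipliers $\rho^{-i}$ on $I_m$ (not $\rho^{i}$) and $\rho^{j+1}$ on $J_m$. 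These cancel the kernel $\rho^{-(j-i)-1}$, and the product of their norms is exactly $\rho^{2m}=e$, so the constant $2eC_K(T)m$ comes out exactly.
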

	
	\begin{proof}
		We begin with (i). Set $\rho:=1-\frac1{2m}$. By Proposition \ref{propHilbertMultipliersApp}(ii),
		\begin{equation*}\label{eqCrDiscreteBound}
			\norm{C_\rho}_{B(\ell^2(\Z;H))} \le \frac{C_K(T)}{1-\rho}=2mC_K(T).
		\end{equation*}
		Define $(K_m^{(\rho)}f)_i := \sum_{j\in J_m} \rho^{j-i}T^{j-i}f_j$, $i\in I_m$. If $E_m$ denotes extension by zero from $J_m$ to $\Z$ and $P_m$ restriction to $I_m$, then $K_m^{(\rho)} = P_mC_\rho E_m$. Therefore
		\[
		\norm{K_m^{(\rho)}} \le \frac{C_K(T)}{1-\rho} = 2mC_K(T).
		\]
		
		Define $(D_If)_i:=\rho^if_i$ and $(D_Jf)_j:=\rho^jf_j$. Then $K_m = D_IK_m^{(\rho)}D_J^{-1}$. Moreover,
		\[
		\norm{D_I}\norm{D_J^{-1}} \le \rho^{-(2m-1)} = \left(1-\frac1{2m}\right)^{-(2m-1)} \le e.
		\]
		Thus
		\[
		\norm{K_m} \le 2eC_K(T)m.
		\]
		\medskip
		
		We now prove (ii). For $m\ge1$, define
		\[
		(V_{2m}f)_i
		:=
		\sum_{n=0}^{2m-1}T^nf_{i+n},
		\qquad
		f\in \ell^p(\Z;L^p(\Omega)),
		\]
		and again set $\rho:=1-\frac1{2m}$. Since $T$ is positive and Abel bounded, for every
		$x\in L^p_+(\Omega)$,
		\begin{align*}
			0
			\le
			\sum_{n=0}^{2m-1}T^nx
			&\le
			\rho^{-(2m-1)}
			\sum_{n=0}^{2m-1}\rho^nT^nx\\
			&\le
			e(I-\rho T)^{-1}x.
		\end{align*}
		Hence
		\begin{equation*}\label{eqPartialSumPositiveDiscrete}
			\norm{\sum_{n=0}^{2m-1}T^n}
			\le
			\frac{eC_A(T)}{1-\rho}
			=
			2eC_A(T)m.
		\end{equation*}
		
		By the discrete positive convolution principle,
		\[
		\norm{V_{2m}}_{B(\ell^p(\Z;L^p))}
		\le
		2eC_A(T)m.
		\]
		
		Extend $f\in\ell^p(J_m;L^p)$ by zero outside $J_m$. For $i\in I_m$, setting $j=i+n$ gives
		\[
		( V_{2m}f)_i 
		= \sum_{n=0}^{2m-1}T^n f_{i+n} 
		= \sum_{j=i}^{i+2m-1} T^{j-i} f_j 
		= \sum_{j\in J_m} T^{j-i} f_j 
		= (K_mf)_i.
		\]
		We conclude that
		\[
		\norm{K_m}_{
			B(\ell^p(J_m;L^p),\ell^p(I_m;L^p))}
		\le
		2eC_A(T)m.
		\]
		This proves (ii).
	\end{proof}
	
	We next record the corresponding upper orbit estimates.
	
	\begin{prop}\label{propOrbitDiscrete}
		\begin{enumerate}[(i)]
			\item
			If $T$ is Kreiss bounded on a Hilbert space $H$, then there
			exists $M_2>0$ such that
			\begin{equation*}\label{eqOrbitDiscreteH}
				\left(\sum_{k=0}^{N}\norm{T^kx}^2\right)^{1/2}
				\le
				M_2(N+1)\norm{x}, 
				\qquad N\ge0
			\end{equation*}
			for every $x\in H$; see \cite{CohenCunyEisnerLin2020}.
			
			\item
			If $T$ is positive and Kreiss bounded on $L^p(\Omega)$, then there exists $M_p >0$ such that
			\begin{equation}\label{eqOrbitPositiveDiscrete}
				\left(
				\sum_{k=0}^{N}\norm{T^kx}_p^p
				\right)^{1/p}
				\le
				M_p(N+1)\norm{x}_p,
				\qquad N\ge0,
			\end{equation}
		\end{enumerate}
		for every $x\in L^p(\Omega)$.
	\end{prop}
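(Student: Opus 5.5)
For (i) there is nothing to add: it is the orbit estimate of Cohen, Cuny, Eisner and Lin \cite{CohenCunyEisnerLin2020}, which we simply quote. For (ii) I would not go through resolvent estimates on lines, since $\norm{T^k}\lesssim k$ alone only gives the weaker growth $(N+1)^{1+1/p}$. Instead I plan to obtain the orbit as the output of a positive convolution, in the spirit of the proof of Proposition \ref{propTriangularDiscrete}(ii). The key observation is that applying a truncated convolution operator to a Dirac-type sequence produces exactly the reversed orbit.

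\emph{Step 1 (partial sums).} Fix $N\ge0$ and set $\rho:=1-\frac1{N+2}$, or $\rho=1-\frac{1}{N+1}$ when $N\ge1$; only $\rho^{-N}\le e$ matters. Positivity and the Neumann series $(I-\rho T)^{-1}=\sum_{n\ge0}\rho^nT^n$ give, for every $x\in L^p_+(\Omega)$,
\[
0\le\sum_{n=0}^{N}T^nx\le\rho^{-N}\sum_{n=0}^{N}\rho^nT^nx\le e\,\rho^{-1}R(\rho^{-1},T)x .
\]
This Neumann series is legitimate because $T$ is Kreiss bounded, hence $r(T)\le1$, and positivity of $T$ gives positivity of the resolvent at real points beyond the spectral radius. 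Since $\norm{R(\rho^{-1},T)}\le C_A(T)/(\rho^{-1}-1)$, and positive operators on $L^p$ attain their norm on the positive cone up to a factor at most $4$ (in fact exactly, for the lattice norm), we get
\[
\norm{\sum_{n=0}^{N}T^n}\le c\,C_A(T)(N+1)
\]
for an absolute constant $c$ (about $2e$).

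\emph{Step 2 (convolution).} Apply Proposition \ref{prop:positive-convolution}(ii) to the finitely supported positive family $K_{-n}:=T^n$ for $0\le n\le N$, and $K_j:=0$ otherwise. The resulting operator
\[
(Sf)_i=\sum_{n=0}^{N}T^nf_{i+n}
\]
then satisfies
\[
\norm{S}_{B(\ell^p(\Z;L^p))}\le\norm{\sum_{n=0}^NT^n}\le c\,C_A(T)(N+1).
\]

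\emph{Step 3 (Dirac input).} For $x\in L^p(\Omega)$ take $f_j:=x$ if $j=N$ and $f_j:=0$ otherwise, so that $\norm f_{\ell^p}=\norm x_p$. For $0\le i\le N$ we get $(Sf)_i=T^{N-i}x$. Hence
\[
\Big(\sum_{k=0}^N\norm{T^kx}_p^p\Big)^{1/p}\le\norm{Sf}_{\ell^p}\le c\,C_A(T)(N+1)\norm x_p ,
\]
which is \eqref{eqOrbitPositiveDiscrete} with $M_p:=c\,C_A(T)$.

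The only delicate point is Step 1. There one must justify the Laplace/Neumann representation and the positivity of the resolvent on $(1,\infty)$, and pass from the order estimate on $L^p_+$ to an operator norm bound. I would handle this exactly as in Proposition \ref{propTriangularDiscrete}(ii), where the same estimate is already used. A minor additional point is that the possible lack of $\sigma$-finiteness plays no role here, since Proposition \ref{prop:positive-convolution}(ii) is stated for arbitrary measure spaces.
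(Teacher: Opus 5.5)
Your argument is correct, but it takes a heavier route than the paper. The paper does not use the convolution principle here. It works pointwise in the lattice: from $|T^kx|\le T^k|x|$ and the fibrewise embedding $\ell^1\hookrightarrow\ell^p$ it gets
\[
\Bigl(\sum_{k=0}^N|T^kx|^p\Bigr)^{1/p}\le\sum_{k=0}^NT^k|x|\le e\rho^{-1}R(\rho^{-1},T)|x|,\qquad \rho=1-\tfrac1{N+1}.
\]
It then takes $L^p$-norms, using $\bigl\|(\sum_k|g_k|^p)^{1/p}\bigr\|_p^p=\sum_k\|g_k\|_p^p$, which gives $M_p=eC_A(T)$.

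Your Steps 2--3 feed a Dirac sequence into $S$ and invoke Proposition~\ref{prop:positive-convolution}(ii). On a Dirac input that bound is exactly the pointwise inequality above, since $\sum_n\|K_nx\|_p^p\le\bigl\|\sum_n K_n|x|\bigr\|_p^p$. So you are using Weis's theorem, whose discrete form the paper only asserts by analogy, for a case that is elementary.

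What your route buys is conceptual. It shows the orbit estimate as the degenerate one-point case of the same Ces\`aro-plus-convolution bound that gives the triangular estimate in Proposition~\ref{propTriangularDiscrete}(ii). Both inputs of Lemma~\ref{lemDiscreteSelfImprovement} then come from one estimate of $\norm{\sum_{n=0}^{N}T^n}$. The paper's route is shorter and self-contained.
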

	
	\begin{proof}
		We only provide the proof of (ii), since it seems to be absent from the literature. Let $N\ge1$, $\rho:=1-\frac1{N+1}$, and $x\in L^p(\Omega)$. By positivity, $|T^kx|\le T^k|x|$ for all $k\ge0$. Since $\rho^{-N}=\left(1+\frac1N\right)^N\le e$, the continuous embedding $\ell^1 \hookrightarrow \ell^p$ yields
		\[
		\left(\sum_{k=0}^{N}|T^kx|^p\right)^{1/p}
		\le
		\sum_{k=0}^{N}T^k|x|
		\le
		\rho^{-N}\sum_{k=0}^{N}\rho^kT^k|x|
		\le
		e\rho^{-1}R(\rho^{-1},T)|x|.
		\]
		Taking the $L^p$-norm and using the definition of $C_A(T)$, we get
		\begin{align*}
			\left(\sum_{k=0}^{N}\norm{T^kx}_p^p\right)^{1/p}
			&\le
			e\rho^{-1}\norm{R(\rho^{-1},T)}\norm{x}_p\\
			&\le
			e\,\frac{C_A(T)}{1-\rho}\norm{x}_p\\
			&=
			eC_A(T)(N+1)\norm{x}_p,
		\end{align*}
		which proves \eqref{eqOrbitPositiveDiscrete}.
	\end{proof}
	
	Finally, if $T$ is positive on $L^p(\Omega)$, then $T^*$ is positive on
	$L^{p'}(\Omega)$. Moreover, for every $\lambda>1$, $
	R(\lambda,T^*)=R(\lambda,T)^*.$ Hence
	\begin{equation}\label{eqAdjointAbelDiscrete}
		C_A(T^*)=C_A(T).
	\end{equation}
	\section{The continuous-time self-improvement}\label{secContinuous}
	
	We now isolate the reciprocal-weight argument. Once the triangular and upper orbit estimates are available, the remaining self-improvement argument is the same in the Hilbertian and positive $L^p$ settings.
	
	In this section, $I_m$ and $J_m$ refer to the intervals $[0,m]$ and $[m,2m]$, respectively.
	
	\begin{lemma}\label{lemContinuousSelfImprovement}
		Let $1<r<\infty$ and let $(T_t)_{t\ge0}$ be a $C_0$-semigroup on a Banach
		space $X$.  Assume that there exist $C_0,C_1>0$ such that, for every $m\ge1$,
		\begin{equation}\label{eqAbstractTriangularContinuous}
			\norm{\mathcal K_m}_{
				B(L^r(J_m;X),L^r(I_m;X))}
			\le
			2eC_0m,
		\end{equation}
		and, for every $t\ge1$ and $x\in X$,
		\begin{equation}\label{eqAbstractOrbitContinuous}
			\left(\int_0^t\norm{T_sx}^r\,ds\right)^{1/r}
			\le
			C_1t\norm{x}.
		\end{equation}
		Then there exists $C>0$ such that
		\begin{equation}\label{eqAbstractConclusionContinuous}
			\norm{T_t}
			\le
			C(1+t)^{1-\varepsilon_r(C_0)},
			\qquad t\ge0.
		\end{equation}
	\end{lemma}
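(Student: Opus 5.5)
The plan is to extract from \eqref{eqAbstractTriangularContinuous} a \emph{lower} bound on the reversed orbit in terms of the scalar function $N(t):=\norm{T_t}$. That lower bound becomes an inequality for the reciprocal weight $N^{-r}$, which gains a fixed factor at each doubling of the scale. The upper orbit estimate \eqref{eqAbstractOrbitContinuous} is used only to close the argument. I have not checked the constants, and the step that should produce exactly $\varepsilon_r(C_0)$ is where I expect the main difficulty.

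\emph{Step 1 (the reversed orbit).} Fix $m\ge1$, $x\in X$ and a scalar weight $g\ge0$ on $J_m$. Apply $\mathcal K_m$ to $f(v):=g(v)\,T_{2m-v}x$. By the semigroup law, $T_{v-s}T_{2m-v}x=T_{2m-s}x$, so
\[
(\mathcal K_mf)(s)=\Big(\int_{J_m}g\Big)\,T_{2m-s}x,\qquad s\in I_m .
\]
For the lower bound on the left-hand side, use $\norm{T_{2m}x}\le N(s)\norm{T_{2m-s}x}$. Then \eqref{eqAbstractTriangularContinuous} gives the \emph{reciprocal-weight inequality}
\[
\Big(\int_{J_m}g\Big)\,\norm{T_{2m}x}\,\Big(\int_0^m N(s)^{-r}\,ds\Big)^{1/r}
\le 2eC_0m\Big(\int_0^m g(2m-u)^r\norm{T_ux}^r\,du\Big)^{1/r}.
\]
The weight $g$ is free. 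The natural normalisations are $g\equiv1$, for which \eqref{eqAbstractOrbitContinuous} controls the right-hand side, and $g(2m-u)\propto N(u)^{-1}$ or the H\"older-optimal choice, which turns the right-hand side into a quantity of the same type as the left.

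\emph{Step 2 (one doubling).} Set $\Psi(m):=\int_0^m N(s)^{-r}\,ds$. Taking $g\equiv1$ and bounding $\norm{T_ux}\le N(u)\norm{x}$, then taking the supremum over $\norm{x}\le1$, gives
\[
N(2m)\,\Psi(m)^{1/r}\le 2eC_0\Big(\int_0^m N^r\Big)^{1/r}.
\]
Feeding this pointwise upper bound on $N$ over $[m,2m]$ into $\Psi(2m)-\Psi(m)=\int_m^{2m}N^{-r}$ should give a lower bound of the form
\[
\Psi(2m)\ge\big(1+(2eC_0)^{-r}\big)\,\Psi(m),
\]
possibly after passing to a suitably rescaled quantity such as $m^{r-1}\Psi(m)$, or after replacing $N$ by $\sup_{s\le t}N(s)$ to make everything monotone. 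The factor $(2eC_0)^{-r}$ is precisely the one appearing in $\varepsilon_r(C_0)$. This is the main obstacle: arranging the bookkeeping so that no extra constants such as $C_1$ or powers of $2$ leak into the multiplicative gain. What I would try first is to work on the dyadic blocks $[2^k,2^{k+1}]$ and to keep the normalisation homogeneous of degree $1-r$, so that the scale factors cancel exactly.

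\emph{Step 3 (iteration and conclusion).} Iterating over $m=2^k$ gives a gain of $(1+(2eC_0)^{-r})^k=(2^k)^{r\varepsilon_r(C_0)}$ relative to the trivial size of the reciprocal quantity. The trivial size is fixed at scale $m=1$ by local boundedness of the semigroup and by \eqref{eqAbstractOrbitContinuous}, which together give $\Psi(1)>0$ and the $O(m)$ a priori control at every scale. Returning to the inequality of Step 2 with $g\equiv1$ and using \eqref{eqAbstractOrbitContinuous} in the form $\big(\int_0^m\norm{T_ux}^r\big)^{1/r}\le C_1m\norm{x}$ then gives
\[
N(2m)\lesssim m\cdot m^{-\varepsilon_r(C_0)}\quad\text{for } m=2^k.
\]
For intermediate $t$, I would use $N(t)\le N(2^k)\,\sup_{0\le s\le1}\norm{T_s}$-type bounds, or simply run the argument with arbitrary $m\ge1$. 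Together with boundedness of $N$ on $[0,2]$, this yields \eqref{eqAbstractConclusionContinuous}.
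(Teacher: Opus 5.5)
There is a genuine gap, and it sits exactly at the step you flagged. It is not a bookkeeping issue. Your Step~1 identity and Step~2 inequality are correct. Combined with \eqref{eqAbstractOrbitContinuous} they give $\norm{T_{2m}}\le 2eC_0C_1\,m\,\Psi(m)^{-1/r}$, so your whole argument rests on $\Psi(m)=\int_0^m\norm{T_s}^{-r}\,ds$ growing like $m^{r\varepsilon_r(C_0)}$. In general $\Psi$ does not grow at all: if $\norm{T_s}\ge cs^{\gamma}$ for large $s$ with $\gamma r>1$, then $\Psi$ is bounded. Take the Eisner--Zwart semigroups recalled in the introduction with $\gamma>1/2$. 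They satisfy both hypotheses with $r=2$ (Propositions~\ref{propTriangularContinuous}(i) and~\ref{propOrbitContinuous}(i)), yet for them the doubling $\Psi(2m)\ge(1+(2eC_0)^{-r})\Psi(m)$ fails. Your closing estimate passes through $\Psi(m)$, so no rescaled variant (such as $m^{r-1}\Psi(m)$, or $\Psi$ built from a monotone majorant of $N$) can give better than linear growth for such semigroups. The information is destroyed when you replace $\norm{T_{2m-s}x}$ by $\norm{T_{2m}x}/N(s)$ and take the supremum over $x$: the profile of operator norms does not self-improve. There is a second problem: moving the endpoint to $2m$ at each scale makes the single-scale inequalities concern different orbits, so they cannot be chained.

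The missing idea is to fix one $t\ge2$ and one $x$ with $T_tx\neq0$ for all scales, and to build the weight from that orbit rather than from operator norms. Put $z_s=T_{t-s}x$, $w(s)=\norm{z_s}^r$ (continuous and strictly positive on $[0,t]$), and $W(m)=\int_0^m w$. For $1\le m\le t/2$, test $\mathcal K_m$ with the H\"older-optimal choice you mention in Step~1 but never use, $f(v)=w(v)^{-1/(r-1)}z_v$ on $J_m$. Then $(\mathcal K_mf)(s)=A_mz_s$ with $A_m=\int_m^{2m}w^{-1/(r-1)}$, and $\norm{f}_{L^r(J_m;X)}^r=A_m$. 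So \eqref{eqAbstractTriangularContinuous} gives $W(m)^{1/r}A_m^{1/r'}\le 2eC_0m$, while H\"older on $J_m$ gives $m\le(W(2m)-W(m))^{1/r}A_m^{1/r'}$. Hence $W(2m)\ge(1+(2eC_0)^{-r})W(m)$, with exactly the constant you were after. Iterate up to $M=2^\ell\le t<2M$ and close with $W(1)\ge(\sup_{0\le s\le 1}\norm{T_s})^{-r}\norm{T_tx}^r$ and $W(M)\le\int_0^t\norm{T_ux}^r\,du\le C_1^rt^r\norm{x}^r$; this yields \eqref{eqAbstractConclusionContinuous}. What grows geometrically is the $L^r$-mass of a single reversed orbit near its endpoint, not a reciprocal operator-norm quantity like $\Psi$.
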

	
	\begin{proof}
		Fix $t\ge2$ and $x\in X$. If $T_tx=0$, there is nothing to prove, so we
		assume that $T_tx\neq0$. For $0\le s\le t$, set
		\[
		z_s:=T_{t-s}x,
		\qquad
		w(s):=\norm{z_s}^r,
		\]
		and, for $0<m\le t$, define
		\[
		W(m):=\int_0^m w(s)\,ds.
		\]
		The function $w$ is continuous and strictly positive on $[0,t]$. Indeed,
		if $w(s)=0$ for some $s\in[0,t]$, then $z_s=0$ and the semigroup property
		would give
		\[
		T_tx=T_sT_{t-s}x=T_sz_s=0,
		\]
		which contradicts our assumption.
		
		Let $m\ge1$ be such that $2m\le t$. Since $w$ is continuous and strictly
		positive on the compact interval $J_m=[m,2m]$, the function
		\[
		f(v):=w(v)^{-1/(r-1)}z_v,
		\qquad v\in J_m,
		\]
		belongs to $L^r(J_m;X)$. Put
		\[
		A_m
		:=
		\int_m^{2m}w(v)^{-1/(r-1)}\,dv.
		\] Since $T_{v-s}z_v = z_s$ for $s\in I_m$ and $v\in J_m$, we have $(\mathcal K_mf)(s) = A_mz_s$.
		
		Thus \[\norm{\mathcal K_mf}_{L^r(I_m;X)}^r = A_m^rW(m) \quad  \text{and} \quad  \norm{f}_{L^r(J_m;X)}^r = A_m.\]
		Applying  \eqref{eqAbstractTriangularContinuous} yields
		\begin{equation*}\label{eqAbstractArContinuous}
			W(m)^{1/r}A_m^{1/r'} \le 2eC_0m, \qquad r'=r/(r-1).
		\end{equation*}
		Comparing this with Hölder's inequality 
		
		\begin{align*}
			m = \int_m^{2m} \frac{w(s)^{1/r}}{w(s)^{1/r}} \, ds 
			&\le \left( \int_m^{2m} w(s) \, ds \right)^{1/r} \left( \int_m^{2m} w(s)^{-1/(r-1)} \, ds \right)^{1/r'} \\
			&= \bigl(W(2m)-W(m)\bigr)^{1/r} A_m^{1/r'},
		\end{align*} we obtain
		\begin{equation}\label{eq:abstract-doubling-continuous}
			W(2m) \ge \left(1+(2eC_0)^{-r}\right)W(m).
		\end{equation}
		
		Setting $\theta_r:=(2eC_0)^{-r}$ and $\delta_r:=\log_2(1+\theta_r) = r\varepsilon_r(C_0)$, iterating \eqref{eq:abstract-doubling-continuous} gives
		\[
		W(2^\ell) \ge (1+\theta_r)^\ell W(1) = 2^{\ell\delta_r}W(1)
		\]
		for every integer $\ell\ge0$ such that $2^\ell\le t$.
		
		To relate $W(1)$ to $\norm{T_tx}$, let $D:=\sup_{0\le s\le1}\norm{T_s}<\infty$. Since $\norm{T_{t-s}x} \ge D^{-1}\norm{T_tx}$ for $s\in[0,1]$,
		we have \[
		W(1) \ge D^{-r}\norm{T_tx}^r.\]
		Choosing $\ell\ge0$ such that $M:=2^\ell \le t < 2M$, we obtain
		\[
		M^{\delta_r}D^{-r}\norm{T_tx}^r \le W(M) = \int_{t-M}^t \norm{T_ux}^r\,du \le C_1^rt^r\norm{x}^r,
		\]
		where we used estimate \eqref{eqAbstractOrbitContinuous}. Since $M>t/2$, it follows that for $t\ge 2$
		\[
		\norm{T_tx} \le 2^{\delta_r/r}DC_1 t^{1-\delta_r/r}\norm{x} = 2^{\varepsilon_r(C_0)}DC_1 t^{1-\varepsilon_r(C_0)}\norm{x}.
		\]
		Extending this bound to $t\ge 0$ by local boundedness concludes the proof of \eqref{eqAbstractConclusionContinuous}.
	\end{proof}
	
	We are now in a position to prove Theorem \ref{thmContinuousMain}.
	\begin{proof}[Proof of Theorem \ref{thmContinuousMain}]
		For (i), apply Lemma \ref{lemContinuousSelfImprovement} with
		\[
		r=2,
		\qquad
		C_0=C_K(T), \quad C_1 = M_2
		\]
		using Propositions \ref{propTriangularContinuous}(i) and
		\ref{propOrbitContinuous}(i).  This gives
		\eqref{eqMainHilbertContinuous}.
		
		For (ii), apply the same lemma with
		\[
		r=p,
		\qquad
		C_0=C_A(T),\quad C_1 = M_p
		\]
		using Propositions \ref{propTriangularContinuous}(ii) and
		\ref{propOrbitContinuous}(ii).  We obtain \[
		\norm{T_t}
		\le
		C(1+t)^{1-\varepsilon_p(C_A(T))}\]
		
		Applying the same argument to $(T_t^*)_{t\ge0}$, which is positive and Abel bounded with $C_A(T^*)=C_A(T)$ by \eqref{eqAdjointAbelContinuous}, gives the estimate with $p'$ in place of
		$p$.  For every $a\in(0,1)$, the map
		\[
		r\longmapsto \frac1r\log(1+a^r),\qquad r>0,
		\]
		is decreasing.  Hence the better exponent is $\varepsilon_q(C_A(T))$, where we recall
		$q=p\wedge p'$. 
	\end{proof}

	\section{The discrete self-improvement}\label{secDiscrete}
	
	The discrete-time proof is the exact analogue of the preceding one, with
	integrals replaced by sums.
	
	\begin{lemma}\label{lemDiscreteSelfImprovement}
		Let $1<r<\infty$ and let $T\in B(X)$.  Assume that there exist $C_0,C_1>0$
		such that, for every $m\ge1$,
		\begin{equation*}\label{eqAbstractTriangularDiscrete}
			\norm{K_m}_{
				B(\ell^r(J_m;X),\ell^r(I_m;X))}
			\le
			2eC_0m,
		\end{equation*}
		and, for every $N\ge0$ and $x\in X$,
		\begin{equation*}\label{eqAbstractOrbitDiscrete}
			\left(\sum_{k=0}^{N}\norm{T^kx}^r\right)^{1/r}
			\le
			C_1(N+1)\norm{x}.
		\end{equation*}
		Then
		\begin{equation*}\label{eqAbstractConclusionDiscrete}
			\norm{T^N}
			\le
			2^{\varepsilon_r(C_0)}C_1(N+1)^{1-\varepsilon_r(C_0)},
			\qquad N\ge0.
		\end{equation*}
	\end{lemma}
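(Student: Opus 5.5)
I will mimic the proof of Lemma~\ref{lemContinuousSelfImprovement}, replacing integrals by sums. Fix $N\ge1$ and $x\in X$ with $T^Nx\neq0$ (otherwise there is nothing to prove). For $0\le k\le N$ set $z_k:=T^{N-k}x$ and $w(k):=\norm{z_k}^r$, and for $1\le m\le N+1$ put $W(m):=\sum_{k=0}^{m-1}w(k)$. As in the continuous case, $w(k)>0$ for all $k\le N$: if $z_k=0$ then $T^Nx=T^kz_k=0$. Note also that $z_i=T^{j-i}z_j$ whenever $i\le j\le N$.

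\emph{Doubling step.} Let $m\ge1$ with $2m-1\le N$. I will define $f_j:=w(j)^{-1/(r-1)}z_j$ for $j\in J_m$ and $A_m:=\sum_{j\in J_m}w(j)^{-1/(r-1)}$. Then $(K_mf)_i=A_mz_i$ for $i\in I_m$. Hence $\norm{K_mf}_{\ell^r(I_m;X)}^r=A_m^rW(m)$, and $\norm{f}_{\ell^r(J_m;X)}^r=\sum_{j\in J_m} w(j)^{-r/(r-1)}w(j)=A_m$. The triangular hypothesis therefore gives $W(m)^{1/r}A_m^{1/r'}\le 2eC_0m$. On the other hand, Hölder's inequality for sums gives
\[
m=\sum_{j\in J_m}w(j)^{1/r}w(j)^{-1/r}\le \bigl(W(2m)-W(m)\bigr)^{1/r}A_m^{1/r'}.
\]
Dividing the two inequalities, I obtain $W(2m)-W(m)\ge(2eC_0)^{-r}W(m)$, that is, $W(2m)\ge(1+\theta_r)W(m)$ with $\theta_r=(2eC_0)^{-r}$.

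\emph{Iteration and conclusion.} Choose $\ell\ge0$ with $M:=2^\ell\le N+1<2M$. Iterating the doubling step (valid since $2^{k}\le N+1$ at each stage) gives
\[
W(M)\ge(1+\theta_r)^\ell W(1)=M^{\delta_r}\norm{T^Nx}^r,\qquad \delta_r:=r\varepsilon_r(C_0).
\]
Here there is no loss from a local-boundedness constant, because $W(1)=w(0)=\norm{T^Nx}^r$ exactly. Next, $W(M)=\sum_{k=N-M+1}^{N}\norm{T^kx}^r\le\sum_{k=0}^N\norm{T^kx}^r\le C_1^r(N+1)^r\norm{x}^r$ by the orbit hypothesis. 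Combining these with $M>(N+1)/2$ yields
\[
\norm{T^Nx}\le C_1(N+1)M^{-\varepsilon_r(C_0)}\norm{x}\le 2^{\varepsilon_r(C_0)}C_1(N+1)^{1-\varepsilon_r(C_0)}\norm{x}.
\]
The case $N=0$ follows from the orbit hypothesis with $N=0$, which gives $\norm{x}\le C_1\norm{x}$, so $C_1\ge1$ and the bound holds.

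The only delicate point I expect is the index bookkeeping. One must check that $J_m=\{m,\dots,2m-1\}$ lies within $\{0,\dots,N\}$ at every doubling step, which is guaranteed by $2m\le M\le N+1$. One must also check that $W(2m)-W(m)$ is exactly the sum over $J_m$. Beyond that, the argument is a direct transcription of the continuous one.
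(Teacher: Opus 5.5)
Your proof is correct and is essentially the paper's argument: the same reciprocal-weight test sequence $f_j=w(j)^{-1/(r-1)}z_j$ on $J_m$, the same H\"older comparison yielding $W(2m)\ge\bigl(1+(2eC_0)^{-r}\bigr)W(m)$, and the same dyadic iteration from $W(1)=\norm{T^Nx}^r$ combined with the orbit bound on $W(M)$ for $M\le N+1<2M$. Your separate treatment of $N=0$ is valid but not needed, since the general argument already covers it with $\ell=0$.
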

	
	\begin{proof}
		Fix $N\in\N$ and $x\in X$.  If $T^Nx=0$, there is nothing to prove.  Set
		\[
		z_j:=T^{N-j}x,
		\qquad
		w_j:=\norm{z_j}^r,
		\qquad
		W_m:=\sum_{j=0}^{m-1}w_j.
		\]
		All $w_j$ are strictly positive.  Let $2m\le N+1$ and define for $j\in J_m$
		\[
		f_j:=w_j^{-1/(r-1)}z_j,
		\qquad \text{and}
		\qquad
		A_m:=\sum_{j=m}^{2m-1}w_j^{-1/(r-1)}.
		\]
		Since $T^{j-i}z_j=z_i$ for $i\in I_m$ and $j\in J_m$,
		\[
		(K_mf)_i=A_mz_i,
		\qquad
		\norm{f}_{\ell^r(J_m;X)}^r=A_m.
		\]
		Hence
		\begin{equation*}\label{eqAbstractArDiscrete}
			W_m^{1/r}A_m^{1/r'}
			\le
			2eC_0m.
		\end{equation*}
		H\"older's inequality yields
		\[
		m
		\le
		(W_{2m}-W_m)^{1/r}A_m^{1/r'},
		\]
		and therefore
		\begin{equation*}\label{eqAbstractDoublingDiscrete}
			W_{2m}
			\ge
			\left(1+(2eC_0)^{-r}\right)W_m.
		\end{equation*}
		Let $
		\delta_r:=r\varepsilon_r(C_0).$ Iteration gives, whenever $2^\ell\le N+1$,
		\[
		W_{2^\ell}
		\ge
		2^{\ell\delta_r}W_1
		=
		2^{\ell\delta_r}\norm{T^Nx}^r.
		\]
		Choose $M=2^\ell$ with $M\le N+1<2M$.  Then
		\[
		M^{\delta_r}\norm{T^Nx}^r
		\le
		W_M
		\le
		\sum_{k=0}^{N}\norm{T^kx}^r
		\le
		C_1^r(N+1)^r\norm{x}^r.
		\]
		Since $M>(N+1)/2$, we obtain
		\[
		\norm{T^Nx}
		\le
		2^{\delta_r/r}C_1(N+1)^{1-\delta_r/r}\norm{x},
		\]
		which is the desired estimate.
	\end{proof}
	
	\begin{proof}[Proof of Theorem \ref{thmDiscreteMain}]
		For (i), apply Lemma \ref{lemDiscreteSelfImprovement} with
		\[
		r=2,
		\qquad
		C_0=C_K(T), \quad C_1 = M_2
		\]
		using Propositions \ref{propTriangularDiscrete}(i) and
		\ref{propOrbitDiscrete}(i).
		
		For (ii), apply the same lemma with
		\[
		r=p,
		\qquad
		C_0=C_A(T), \quad C_1 = M_p
		\]
		using Propositions \ref{propTriangularDiscrete}(ii) and
		\ref{propOrbitDiscrete}(ii).  Apply the result once more to $T^*$ and use
		\eqref{eqAdjointAbelDiscrete}; as in continuous time, the better of the
		$p$ and $p'$ exponents is $\varepsilon_q(C_A(T))$. 
	\end{proof}

	
	\section{Individually eventually positive $C_0$-semigroups on $L^p$-spaces}\label{secIndividual}
	
	We now consider an extension of the positive $L^p$ result to individually eventually positive semigroups. We deliberately treat this case separately from the positive case considered above. Indeed, for positive semigroups the positive convolution principle gives the required triangular estimate directly, 
	with an explicit constant, and leads to a shorter and quantitative proof. Under individual eventual positivity, there need not exist a uniform positivity time, and a different argument is therefore required. The proof relies on the domination results developed in
	\cite{ArnoldStability} and is therefore also of independent interest.

	If $E$ is a complex Banach lattice, we denote by $E_{\mathbb R}$
	its real part and by $E_+ $ its positive cone. For $f\in E_{\mathbb R}$,
	we write $f\ge0$ if $f\in E_+$, and $|f|$ denotes the lattice modulus of
	$f$. The same notation $|\cdot|$ is used for the modulus in the
	complexification of $E$.
	
	For $g\in E_+$, the principal ideal generated by $g$ is
	\[
	E_g
	:=
	\{f\in E:\ |f|\le c g
	\text{ for some }c\ge0\}.
	\]
	Equipped with the gauge norm
	\[
	\norm{f}_g
	:=
	\inf\{c\ge0:\ |f|\le c g\},
	\qquad f\in E_g,
	\]
	the space $E_g$ is itself a Banach lattice.
	
	We say that $(T_t)_{t\ge0}$ is \emph{individually eventually positive} if, for every $f\in E_+$, there exists $t_f\ge0$ such that \[ T_tf\ge0,\qquad t\ge t_f. \] We shall use the Ces\`aro constant 
	\[ C_{\mathrm{Ces}}(T) := \sup_{t\ge1} \frac1t\norm{\int_0^tT_s\,ds}. \]
	We recall that according to \cite[Proposition 2.6]{ArnoldCoine2023}, for individually 
	eventually positive $C_0$-semigroups, Cesàro boundedness and Kreiss boundedness are equivalent. Here Ces\`aro boundedness means that $C_{\mathrm{Ces}}(T) < \infty$. 
	
	To prove Theorem \ref{thmIndEv}
	We will use the following. 
	
	\begin{prop}\label{propIndividualDomination}
		Let $(T_t)_{t\ge0}$ be an individually eventually positive semigroup on a
		complex Banach lattice $E$. Let $Y$ be a real Banach space, let $g\in E_+$,
		and let
		\[
		J:Y\longrightarrow E_{\R}
		\]
		be bounded with $JY\subseteq E_g$. Then there exist $\tau\ge0$ and $c\ge0$
		such that, for every $t\ge\tau$ and $y\in Y$,
		\[
		|T_tJy|
		\le
		c\norm{y}_Y T_tg.
		\]
		In particular, $T_tg\ge0$ for every $t\ge\tau$.
	\end{prop}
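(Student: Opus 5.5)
The plan is a Baire category argument. Its purpose is to turn the vector-dependent positivity times into a single uniform time $\tau$, exploiting the fact that the relevant positivity condition is convex and symmetric in $y$.

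\emph{Step 1 (boundedness into the ideal).} The space $E_g$ with the gauge norm embeds continuously into $E$, because $|f|\le\norm{f}_g\,g$ gives $\norm{f}\le\norm{f}_g\norm{g}$. Hence $J:Y\to E_g$ has closed graph: if $y_n\to y$ in $Y$ and $Jy_n\to h$ in $E_g$, then $Jy_n\to h$ in $E$ as well as $Jy_n\to Jy$ in $E$, so $h=Jy$. By the closed graph theorem there is $\kappa>0$ with
\[
|Jy|\le\kappa\norm{y}_Y\,g,\qquad y\in Y.
\]
Since $JY\subseteq E_{\R}$, this shows $g\pm\kappa^{-1}Jy\in E_+$ whenever $\norm{y}_Y\le1$.

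\emph{Step 2 (Baire).} For $n\in\N$ define
\[
F_n:=\Bigl\{y\in Y:\ T_t\bigl(g+\kappa^{-1}Jy\bigr)\ge0\ \text{and}\ T_t\bigl(g-\kappa^{-1}Jy\bigr)\ge0\ \text{for all } t\ge n\Bigr\}.
\]
\begin{itemize}
\item Each $F_n$ is closed, since each $T_t$ and $J$ are continuous and $E_+$ is closed.
\item Each $F_n$ is symmetric under $y\mapsto-y$, because the two defining conditions are swapped.
\item Each $F_n$ is convex, because the conditions are affine in $y$ and $E_+$ is a convex cone.
\end{itemize}
By Step 1 and individual eventual positivity applied to the two vectors $g\pm\kappa^{-1}Jy$, every $y$ in the open unit ball $U$ of $Y$ lies in some $F_n$. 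Hence $U=\bigcup_n(F_n\cap U)$. Since $U$ is an open subset of a complete metric space, Baire's theorem yields $n_0$, a point $y_0$ and a radius $\rho>0$ with $B(y_0,\rho)\subseteq F_{n_0}$. By symmetry $B(-y_0,\rho)\subseteq F_{n_0}$, and by convexity every $y$ with $\norm{y}<\rho$ lies in $F_{n_0}$, since $y=\tfrac12(y_0+y)+\tfrac12(-y_0+y)$.

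\emph{Step 3 (conclusion).} Set $\tau:=n_0$. For $\norm{y}<\rho$ and $t\ge\tau$, both $T_t(g+\kappa^{-1}Jy)$ and $T_t(g-\kappa^{-1}Jy)$ are positive, hence real. Their half-sum and half-difference show that $T_tg$ and $T_tJy$ belong to $E_{\R}$. The two positivity conditions then read
\[
-T_tg\le\kappa^{-1}T_tJy\le T_tg,
\]
that is, $|T_tJy|\le\kappa\,T_tg$. Taking $y=0$ gives $T_tg\ge0$ for $t\ge\tau$. For general $y\ne0$, apply the inequality to $y'=\tfrac{\rho}{2}\,y/\norm{y}$ and use homogeneity to get
\[
|T_tJy|\le\frac{2\kappa}{\rho}\,\norm{y}_Y\,T_tg,\qquad t\ge\tau,
\]
so $c:=2\kappa/\rho$ works.

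The main obstacle is Step 2. One has to make sure that Baire is applied to a complete (or open-in-complete) space and that the covering sets are genuinely closed; this is why positivity is imposed for all $t\ge n$ rather than at a single time. The convexity and symmetry of $F_n$ are essential for moving the ball produced by Baire from $y_0$ to the origin. A minor point is checking that $T_tJy$ is real so that the lattice modulus makes sense; this follows from the positivity of the two vectors above.
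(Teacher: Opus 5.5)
Your proof is correct. Strictly speaking the paper gives no proof of this proposition: it only quotes \cite[Corollary 2.2]{ArnoldStability}. You supply a complete, self-contained argument instead. The closed graph theorem (using completeness of $(E_g,\norm{\cdot}_g)$, which the paper records) upgrades $JY\subseteq E_g$ to $|Jy|\le\kappa\norm{y}_Y\,g$, so that $g\pm\kappa^{-1}Jy\in E_+$ on the unit ball. Baire's theorem, applied to the closed, convex, symmetric sets $F_n$, then produces a uniform time $n_0$ on some ball, and symmetry plus convexity move that ball to the origin. Finally, the two positive vectors $T_t(g\pm\kappa^{-1}Jy)$ give both the reality of $T_tJy$ and the bound $-T_tg\le\kappa^{-1}T_tJy\le T_tg$. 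All steps check out, including the use of the open unit ball as a Baire space and the rescaling that gives $c=2\kappa/\rho$.

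Your route has two advantages over the bare citation. First, it makes visible that $\tau$ and $c$ come from a Baire category argument and so carry no quantitative information. This is exactly the source of the non-quantitative character of Theorem~\ref{thmIndEv} that the paper points out in its remark. Second, it shows that neither strong continuity nor the semigroup law is used: it suffices that each $T_t$ is bounded and that the family is individually eventually positive. The citation keeps the paper short and places the statement within the domination theory of \cite{ArnoldStability}.

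As a minor variant, you could avoid the closed graph theorem and the completeness of $E_g$ altogether. Run Baire on all of $Y$ with the closed, convex, symmetric sets $\{y\in Y:\ |Jy|\le kg,\ T_t(kg\pm Jy)\ge0 \text{ for all } t\ge n\}$, indexed by $(n,k)\in\N^2$. These cover $Y$ because $JY\subseteq E_g$.
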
 
	\begin{proof} This is \cite[Corollary 2.2]{ArnoldStability}. \end{proof}
	
	From now on, for $1<p<\infty$, we denote $E = L^p(\Omega)$ so that $E_{\R} = L^p(\Omega, \R)$ and $E_{+} = L^p(\Omega, \R_+)$.
	
	We shall repeatedly use the following standard observation. If
	$U:\R\to L^p(\Omega)$ is strongly measurable, then its essential range is
	contained in a separable subspace of $L^p(\Omega)$ and hence, up to null
	sets, in $L^p(\Omega_0)$ for some $\sigma$-finite measurable subset
	$\Omega_0\subseteq\Omega$. Thus the usual Fubini-Tonelli arguments may be
	applied after this reduction. We shall use this observation without further
	comment; see also the proof of \cite[Lemma 1]{Vogt2022}.
	\begin{lemma}\label{lemIndividualLatticeDuality}
		Let
		$U\in L^p(\R;E)$ and $ H\in E_+$ such that
		\[
		\left|
		\int_{\R}h(s)U(s)\,ds
		\right|
		\le H
		\]
		for every $h\in L^{p'}(\R;\R)$ with $\norm{h}_{p'}\le1$. Then
		\begin{equation*}\label{eqIndividualLatticeDuality}
			\left(
			\int_{\R}|U(s)|^p\,ds
			\right)^{1/p}
			\le 2H.
		\end{equation*}
	\end{lemma}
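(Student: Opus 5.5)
We argue pointwise on $\Omega$: for almost every $\omega$ we identify the left-hand side with an $L^p(\R)$-norm in the variable $s$, and we compute that norm by duality against a countable family of test functions. The constant $2$ comes from treating real and imaginary parts separately.

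\emph{Step 1: reduction to a scalar kernel.} By the observation preceding the lemma, we may assume $\Omega$ is $\sigma$-finite. Then $U\in L^p(\R;L^p(\Omega))$ is represented by a jointly measurable function $u(s,\omega)$ with $U(s)=u(s,\cdot)$ for a.e.\ $s$, and $\int_\R\int_\Omega|u|^p\,d\mu\,ds<\infty$. By Fubini--Tonelli, $u(\cdot,\omega)\in L^p(\R)$ for a.e.\ $\omega$. The quantity to estimate is therefore the function
\[
\omega\mapsto \norm{u(\cdot,\omega)}_{L^p(\R)} .
\]

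\emph{Step 2: evaluating the Bochner integral pointwise.} Fix $h\in L^{p'}(\R;\R)$ with $\norm{h}_{p'}\le1$. Then $hU$ is Bochner integrable by H\"older's inequality. Its integral is computed pointwise:
\[
\Bigl(\int_\R h(s)U(s)\,ds\Bigr)(\omega)=\int_\R h(s)u(s,\omega)\,ds \quad\text{for a.e. }\omega .
\]
To justify this, pair with an arbitrary $\varphi\in L^{p'}(\Omega)$, use Fubini (the integrand is integrable by H\"older in both variables), and conclude since $L^{p'}(\Omega)$ separates points of $L^p(\Omega)$.

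Because $h$ is real, taking real and imaginary parts of the hypothesis gives, for a.e.\ $\omega$,
\[
\Bigl|\int_\R h(s)\,\mathrm{Re}\,u(s,\omega)\,ds\Bigr|\le H(\omega),
\qquad
\Bigl|\int_\R h(s)\,\mathrm{Im}\,u(s,\omega)\,ds\Bigr|\le H(\omega).
\]
The exceptional null set depends on $h$.

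\emph{Step 3: countable duality.} This step is the main point, and the only real obstacle is the dependence of the null set on $h$. We handle it by passing to a countable family. Since $L^{p'}(\R;\R)$ is separable, choose a countable set $\mathcal D$ dense in its closed unit ball. By real $L^p$--$L^{p'}$ duality, for every real $g\in L^p(\R)$,
\[
\norm{g}_p=\sup_{h\in\mathcal D}\int_\R hg\,ds .
\]
Discard the countable union of exceptional null sets from Step 2 over $h\in\mathcal D$, together with the null set from Step 1. For every remaining $\omega$ we obtain
\[
\norm{\mathrm{Re}\,u(\cdot,\omega)}_p\le H(\omega),
\qquad
\norm{\mathrm{Im}\,u(\cdot,\omega)}_p\le H(\omega).
\]

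\emph{Step 4: conclusion.} Since $|u|\le|\mathrm{Re}\,u|+|\mathrm{Im}\,u|$, Minkowski's inequality in $L^p(\R)$ gives, for a.e.\ $\omega$,
\[
\Bigl(\int_\R|u(s,\omega)|^p\,ds\Bigr)^{1/p}\le 2H(\omega).
\]
This is exactly the asserted lattice inequality in $E$.
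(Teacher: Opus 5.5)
Your proof is correct and follows the same route as the paper: $\sigma$-finite reduction, real-valued lattice duality applied separately to $\operatorname{Re}U$ and $\operatorname{Im}U$, and then $|U|\le|\operatorname{Re}U|+|\operatorname{Im}U|$. The only difference is that you prove the real duality step yourself, using a jointly measurable representative and a countable dense subset of the unit ball of $L^{p'}(\R;\R)$ to control the exceptional null sets, whereas the paper cites \cite[Lemma 1]{Vogt2022} for that step.
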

	\begin{proof}
		After the $\sigma$-finite reduction described above, apply the real-valued
		argument of \cite[Lemma 1]{Vogt2022} to $\operatorname{Re}U$ and
		$\operatorname{Im}U$, and use
		$|U|\le|\operatorname{Re}U|+|\operatorname{Im}U|$.
	\end{proof}
	The specific ingredients needed to apply the self-improvement argument from Section \ref{secContinuous} are
	contained in the next proposition. 
	
	\begin{prop}\label{propIndividualEstimates} Assume that $(T_t)_{t\ge0}$ is individually eventually positive and Ces\`aro bounded on $E=L^p(\Omega)$. 
		Then there exist $\Gamma,\Lambda<\infty$ such that the following assertions hold. \begin{enumerate}[(i)] 
			\item For $L>0$, define $\mathcal V_L$  by $(\mathcal V_LF)(s) := \int_0^L T_uF(s+u)\,du$ for $F\in L^p(\R;E).$ Then for every $L \ge 1$, \begin{equation}\label{eqIndividualConvolution}
				\norm{\mathcal V_L}_{B(L^p(\R;E))} \le \Gamma L. 
			\end{equation} \item For every $R\ge1$ and $f\in E$, \begin{equation}\label{eqIndividualOrbit} \left( \int_0^R\norm{T_tf}_p^p\,dt \right)^{1/p} \le \Lambda R\norm{f}_p. \end{equation} \end{enumerate}
	\end{prop}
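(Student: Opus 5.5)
The plan is to recover, up to an error controlled by a single dominating orbit, the positivity needed in the proof of Proposition~\ref{propTriangularContinuous}(ii) and Proposition~\ref{propOrbitContinuous}(ii), and then to bound that error using Ces\`aro boundedness. First, because $\int_0^\tau T_u\,du$ and $\sup_{u\le\tau}\norm{T_u}$ are bounded for every fixed $\tau$, both statements reduce to the tail parts $u\in[\tau,L]$ and $t\in[\tau,R]$. In that range the only difficulty is that the positivity time depends on the vector.

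To get a uniform time, I would use a Baire argument on the closed cone $E_+$. The sets $\{f\in E_+:\ T_tf\ge0 \text{ for all } t\ge n\}$ are closed and cover $E_+$. Hence one of them contains a relative ball $B(f_0,\delta)\cap E_+$, and enlarging $n$ we may assume $f_0$ also lies in it. Write $\tau=n$. For $f\in E_+$ with $\norm f\le\delta$ we have $f_0+f$ in the ball, so
\[
T_tf\ \ge\ -\,\frac{\norm f}{\delta}\,T_tf_0,\qquad t\ge\tau .
\]
By homogeneity this holds for all $f\ge0$. Proposition~\ref{propIndividualDomination} could alternatively supply such a comparison with a fixed positive orbit $T_tg$, with $Y=(E_g)_\R$ and $J$ the inclusion. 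In either form, for real $f=f^+-f^-$ we get
\[
|T_tf|\le T_t|f|+\frac{2\norm f}{\delta}T_tf_0 .
\]
The same bound applies to complex $f$ through its real and imaginary parts. So $T_t$ is pointwise dominated by a positive-type part plus a ``rank-one'' part $\norm{f}\,T_tf_0$.

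For (ii), I would apply Lemma~\ref{lemIndividualLatticeDuality} to $U(t)=\one_{[\tau,R]}(t)T_tf$. Pairing with $h$, $\norm h_{p'}\le1$, and using the domination, the quantity $\bigl|\int h(t)T_tf\,dt\bigr|$ is bounded by
\[
\int_\tau^R|h|\,T_t|f|\,dt+\frac{2\norm f}{\delta}\int_\tau^R|h|\,T_tf_0\,dt .
\]
Here I would need an element $H$ with $\norm H_p\lesssim R\norm f$ dominating both terms uniformly in $h$. For the second term the natural candidate is $\sup_h\int|h|\,T_tf_0$, controlled via Ces\`aro boundedness and positivity of $T_tf_0$ on $[\tau,R]$. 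For (i), I would write $\mathcal V_L$ on $[\tau,L]$ and apply the domination to $F(s+u)$. This dominates $|\mathcal V_LF(s)|$ by
\[
\int_\tau^L T_u|F(s+u)|\,du+\frac2\delta\int_\tau^L\norm{F(s+u)}\,T_uf_0\,du .
\]
The first term would then be handled as in the positive convolution principle of Proposition~\ref{prop:positive-convolution}, with the Ces\`aro bound $\norm{\int_0^LT_u}\le C_{\mathrm{Ces}}(T)L$ replacing the Abel bound. The second term is a scalar-kernel convolution of $\norm{F(\cdot)}\in L^p(\R)$ against $u\mapsto T_uf_0$.

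The main obstacle is that $T_u$ is not positive on $[\tau,L]$. The positive convolution principle (Weis) therefore cannot be applied directly to the first term. Worse, both error terms involve $\int|h(u)|\,T_uf_0\,du$ rather than $\int T_uf_0\,du$, so Ces\`aro boundedness alone does not obviously give an $L^p$ bound linear in $L$. I would first try to run Weis's argument for the eventually positive kernel by approximating simple $F\ge0$, accepting the error from the $f_0$-correction. For that correction I would attempt a Young/Minkowski bound using $\sup_u\norm{T_uf_0}\lesssim u$, which follows from $O(t)$ growth. The $O(t)$ growth itself I would try to obtain from Ces\`aro boundedness and domination by the positive orbit of $f_0$. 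This step is where I expect the real work, and possibly the use of \cite{ArnoldStability} beyond Proposition~\ref{propIndividualDomination}.
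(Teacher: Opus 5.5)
Your proposal has a genuine gap, and it is exactly the step you defer. The Baire argument itself is sound: the sets are closed, they cover $E_+$, and they yield $T_tx\ge-\frac{\norm{x}}{\delta}T_tf_0$ for $x\ge0$ and $t\ge\tau$. However, the resulting domination $|T_ux|\le T_u|x|+\frac{2\norm{x}}{\delta}T_uf_0$ cannot close either estimate.

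In (i), the first term $\int_\tau^LT_u|F(s+u)|\,du$ is the same tail operator applied to $|F|$. So you have only reduced to $F\ge0$, where $T_u$ is still not positive and Weis's principle is still unavailable. Since $x\mapsto\norm{x}\,T_uf_0$ is not linear, the correction cannot be folded into a positive kernel either. The correction term on its own is harmless: since $T_uf_0\ge0$ on $[\tau,L]$, Young's inequality in $s$, applied pointwise in $\omega$, bounds its norm by $\frac2\delta\norm{F}\,\norm{\int_\tau^LT_uf_0\,du}_p$, which is linear in $L$ by Ces\`aro boundedness. Your suggested Young bound via $\sup_{u\le L}\norm{T_uf_0}\lesssim L$ would only give $L^2$.

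In (ii), an $H$ dominating $\int|h|\,T_t|f|\,dt+\frac{2\norm f}{\delta}\int|h|\,T_tf_0\,dt$ uniformly in $\norm{h}_{p'}\le1$ is essentially $(\int_\tau^R|T_t|f||^p\,dt)^{1/p}+\frac{2\norm f}{\delta}(\int_\tau^R(T_tf_0)^p\,dt)^{1/p}$. Its norm is the very orbit estimate you want to prove. Ces\`aro boundedness controls $\int T_tf_0\,dt$, but in continuous time there is no analogue of $\ell^1\hookrightarrow\ell^p$ to pass to the $L^p$-in-time norm.

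The missing idea is to dualize first and then apply Proposition~\ref{propIndividualDomination} to the $h$-averaged vector, with a dominating element that depends on $F$. Using the individual values $F(s+u)$ or $T_tf$ against a fixed $f_0$ does not work.

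For (i), fix real $F$ and set $g_F:=(\int_\R|F(v)|^p\,dv)^{1/p}$, so that $\norm{g_F}_p=\norm{F}$, and $J_Fh:=\int_\R h(v)F(v)\,dv$.
\begin{itemize}
\item H\"older gives $|J_Fh|\le\norm{h}_{p'}g_F$, hence $|T_uJ_Fh|\le c_F\norm{h}_{p'}T_ug_F$ for $u\ge\tau_F$. This is your inclusion $E_g\hookrightarrow E$, taken with $g=g_F$.
\item Fubini gives $\int_\R h(s)\int_{\tau_F}^LT_uF(s+u)\,du\,ds=\int_{\tau_F}^LT_uJ_F\bigl(h(\cdot-u)\bigr)\,du$.
\item This is therefore dominated by $c_F\norm{h}_{p'}\int_{\tau_F}^LT_ug_F\,du$, so the weight $h$ has disappeared from the dominating term.
\item Lemma~\ref{lemIndividualLatticeDuality} and Ces\`aro boundedness then give $\norm{\mathcal V_LF}\le C_FL$ for $L\ge1$.
\end{itemize}

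For (ii), take $f\ge0$ with $T_tf\ge0$ for $t\ge a$.
\begin{itemize}
\item Use $J_fh:=\int_a^{a+1}h(r)T_rf\,dr$ with the dominating element $(\int_a^{a+1}|T_rf|^p\,dr)^{1/p}$.
\item Use the averaging identity $\int_b^Rh(t)T_tf\,dt=\int_{\tau}^{R-a}T_uJ_f\bigl(h(u+\cdot)|_{[a,a+1]}\bigr)\,du$ with $b=a+\tau+1$.
\item This identity replaces the missing $\ell^1\hookrightarrow\ell^p$ step.
\end{itemize}

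All constants obtained this way depend on $F$ or $f$. The uniform $\Gamma$ and $\Lambda$ come from Banach--Steinhaus, so neither a uniform positivity time nor a Baire argument on $E_+$ is needed.
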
 
	\begin{proof}
		For $a\ge0$, set $D_a:=\int_0^a\norm{T_u}\,du$. Fix $F\in L^p(\R;E_{\R})$ and set 
		\[g_F := \left( \int_{\R}|F(v)|^p\,dv \right)^{1/p} \in E_+.
		\]
		By Fubini-Tonelli, $\norm{g_F}_p=\norm{F}_{L^p(\R;E)}$. 
		
		Define $J_F:L^{p'}(\R;\R)\longrightarrow E_{\R}$ by $J_Fh:=\int_{\R}h(v)F(v)\,dv$. H\"older's inequality gives $|J_Fh| \le \norm{h}_{p'}g_F$, so $J_FL^{p'}(\R;\R)\subseteq E_{g_F}$. Proposition \ref{propIndividualDomination} yields $\tau_F,c_F\ge0$ such that for every $u\ge \tau_F$,
		\begin{equation}\label{eqIndividualJF}
			|T_uJ_Fh| \le c_F\norm{h}_{p'}T_ug_F.
		\end{equation}
		
		For $0\le a\le L$, define $(\mathcal V_{a,L}F)(s) := \int_a^L T_uF(s+u)\,du$. Let $L\ge\tau_F$ and $h\in L^{p'}(\R;\R)$. Fubini's theorem and \eqref{eqIndividualJF} give
		\[
		\left| \int_{\R}h(s)(\mathcal V_{\tau_F,L}F)(s)\,ds \right| \le c_F\norm{h}_{p'} \int_{\tau_F}^L T_ug_F\,du.
		\]
		Now Lemma \ref{lemIndividualLatticeDuality}  yields
		\[
		\norm{\mathcal V_{\tau_F,L}F}_{L^p(\R;E)} \le 2c_F \norm{\int_{\tau_F}^L T_ug_F\,du}_p.
		\]
		Consequently, for $L\ge\tau_F$,
		\[
		\norm{\mathcal V_LF}_{L^p(\R;E)} \le \Bigl( 2c_F C_{\mathrm{Ces}}(T)L + (2c_F+1)D_{\tau_F} \Bigr) \norm{F}_{L^p(\R;E)}.
		\]
		Since for $1\le L<\tau_F$, $\norm{\mathcal V_LF}_{L^p(\R;E)} \le D_{\tau_F}\norm{F}_{L^p(\R;E)}$, it follows that \[
		\sup_{L\ge1} \frac1L \norm{\mathcal V_LF}_{L^p(\R;E)} < \infty
		\]
		for every real $F\in L^p(\R;E)$, and hence for every complex $F$ by decomposition into real and imaginary parts. Banach-Steinhaus yields \eqref{eqIndividualConvolution}.
		\medskip 
		
		We now prove (ii). Fix $f\in E_+$. Choose $a\ge0$ such that for every $t\ge a$, $ T_tf\ge0$ and 
		define \[ F := \left( \int_a^{a+1}|T_rf|^p\,dr \right)^{1/p} \in E_+. \] Let $ Y:=L^{p'}([a,a+1];\R)$
		and $ J_fh:=\int_a^{a+1}h(r)T_rf\,dr. $  H\"older's inequality gives \[ |J_fh| \le \norm{h}_{p'}F. \] Proposition \ref{propIndividualDomination} therefore 
		yields $\tau_F,c_F\ge0$ such that for every $u \ge \tau_F$
		\begin{equation}\label{eqIndividualOrbitDomination} 
			|T_uJ_fh| \le c_F\norm{h}_{p'}T_uF. 
		\end{equation} 
		Set $ b:=a+\tau_F+1. $ Then $T_tf\ge0$ for every $t\ge b$. Fix $R>b$, and let \[ 0\le h\in L^{p'}([b,R]), \qquad \norm{h}_{p'}\le1, \] extended by zero outside $[b,R]$. For $u\ge\tau_F$, define $h_u(r):=h(u+r)$ for $r\in[a,a+1].$  Then $\norm{h_u}_{p'}\le1$. Using Fubini's theorem and the semigroup property gives \begin{align*}
			\int_b^R h(t) T_t f \, dt &= \int_b^R h(t) T_t f \left( \int_{t-a-1}^{t-a} du \right) dt \\
			&= \int_{b-a-1}^{R-a} \int_{u+a}^{u+a+1} h(t) T_t f \, dt \, du \\
			&= \int_{\tau_F}^{R-a} T_u \left( \int_a^{a+1} h_u(r) T_r f \, dr \right) du \\
			&= \int_{\tau_F}^{R-a} T_u J_f(h_u) \, du
		\end{align*}
		Since the vector on the left-hand side is positive, \eqref{eqIndividualOrbitDomination} implies \begin{equation}\label{eqIndividualOrbitOrder} 0 \le \int_b^R h(t)T_tf\,dt \le c_F\int_{\tau_F}^{R-a}T_uF\,du. \end{equation} 
		Hence \cite[Lemma 1]{Vogt2022} gives \[ \left( \int_b^R(T_tf)^p\,dt \right)^{1/p} = \sup_{\substack{ 0\le h\in L^{p'}([b,R])\\ \norm{h}_{p'}\le1 }} \int_b^R h(t)T_tf\,dt. \]
		It follows from \eqref{eqIndividualOrbitOrder} that 
		\[ \left( \int_b^R(T_tf)^p\,dt \right)^{1/p} \le c_F\int_{\tau_F}^{R-a}T_uF\,du. \]
		Taking $L^p$-norms and using Ces\`aro boundedness, we obtain 
		\begin{align*} \left( \int_b^R\norm{T_tf}_{L^p}^p\,dt \right)^{1/p} &\le c_F\norm{\int_{\tau_F}^{R-a}T_uF\,du}_{L^p}\\ &\le c_F\left( \norm{\int_0^{R-a}T_u\,du} + D_{\tau_F} \right)\norm{F}_{L^p}\\ &\le c_F\bigl(C_{\mathrm{Ces}}(T)R+D_{\tau_F}\bigr) \norm{F}_{L^p}. \end{align*}
		The integral over the fixed interval $[0,b]$ is controlled by local boundedness. Therefore \[ \sup_{R\ge1} \frac1R \left( \int_0^R\norm{T_tf}_p^p\,dt \right)^{1/p} <\infty \] 
		for every $f\in E_+$ and so for every $f \in E$. 
		Banach-Steinhaus therefore yields \eqref{eqIndividualOrbit}. \end{proof} 
	
	We can now prove Theorem \ref{thmIndEv}.
	
	\begin{proof}[Proof of Theorem \ref{thmIndEv}] Increase $\Gamma$, if necessary, so that $\Gamma\ge1$. For $m\ge1$,
		we recall that $I_m=[0,m]$,\, $J_m=[m,2m]$ and 
		for $f \in L^p(J_m, L^p)$, \[ (\mathcal K_mf)(s) = \int_{J_m}T_{v-s}f(v)\,dv, \qquad s\in I_m. \]
		From \eqref{eqFinalSec} and \eqref{eqIndividualConvolution} we deduce \[ \norm{\mathcal K_m}_{ B(L^p(J_m;E),L^p(I_m;E)) } \le 2\Gamma m. \] Together with \eqref{eqIndividualOrbit}, this is precisely the setting of Lemma \ref{lemContinuousSelfImprovement}, with \[ r=p, \qquad C_0=\frac{\Gamma}{e}, \qquad C_1=\Lambda. \] We therefore obtain \[ \norm{T_t} \le C(1+t)^{1-\varepsilon}, \qquad t\ge0, \] where \[ \varepsilon = \frac1p \log_2 \left( 1+(2\Gamma)^{-p} \right) >0. \] Since $\Gamma\ge1$, we have $\varepsilon<1$. This proves the assertion under Ces\`aro boundedness. The Kreiss bounded case follows immediately from \cite[Proposition 2.6]{ArnoldCoine2023}. \end{proof}
	
	\begin{remark} The preceding argument is non-quantitative. Indeed, the constant $\Gamma$ in \eqref{eqIndividualConvolution} is obtained by combining Proposition \ref{propIndividualDomination} from \cite{ArnoldStability} with
		Banach-Steinhaus, 
		and the proof does not provide a bound for $\Gamma$ in terms of $p$ and $C_K(T)$ (or $C_{\mathrm{Ces}}(T)$) alone. This contrasts with the positive case treated above, where the positive convolution principle yields an explicit triangular estimate and therefore an explicit exponent depending on the Abel constant. Moreover, the argument above directly gives the exponent associated with $p$. If the adjoint semigroup on $L^{p'}(\Omega)$ is also individually eventually positive (which is not automatic), the same proof can be applied to $(T_t^*)_{t\ge0}$, and one may then take the better of the two resulting exponents. \end{remark}

	\section{Kreiss bounded operators on UMD spaces}\label{secUMD}
	
	We now return to the discrete-time setting. We show that the positivity
	assumption can be removed on $L^p$-spaces and, more generally, that every
	Kreiss bounded operator on a UMD space has a polynomial gap below linear
	growth. The price for this greater generality is a less explicit exponent:
	unlike in the positive case, the exponent obtained below also depends on
	Fourier multiplier and decomposition constants of the underlying space.
	The argument is again based on a triangular estimate, but now in a Fourier
	norm.
	
	Let $X$ be a complex Banach space and let $1<r<\infty$. For a finite sequence
	$(x_j)_{j\in\Z}\subset X$, set
	\[
	\norm{(x_j)}_{\mathcal F_r(X)}
	:=
	\left(
	\int_{\T}\norm{\sum_j\gamma^{-j}x_j}_X^r\,d\gamma
	\right)^{1/r}.
	\]
	We first recall a standard consequence of the UMD property. If
	$a=(a_n)_{n\in\Z}$ is a bounded scalar sequence, the associated Fourier
	multiplier $T_a$ is initially defined on $X$-valued trigonometric polynomials
	by $T_a(\sum_n\gamma^{-n}x_n)=\sum_na_n\gamma^{-n}x_n$. Since $X$ is UMD, the
	vector-valued Marcinkiewicz multiplier theorem implies that there exists
	$M_{r,X}\ge1$ such that, whenever $a$ has bounded variation,
	\begin{equation}\label{eqBVMultiplier}
		\norm{T_a}_{B(L^r(\T;X))}
		\le
		M_{r,X}
		\left(
		\sup_{n\in\Z}|a_n|
		+
		\sum_{n\in\Z}|a_{n+1}-a_n|
		\right).
	\end{equation}
	See, for instance, \cite[Lemma 2.1]{DengLoristVeraar}.
	
	If $I\subseteq\Z$ is an interval, we denote by $P_I$ the Fourier projection
	onto the frequencies in $I$. Applying \eqref{eqBVMultiplier} to
	$a_n=\one_I(n)$ gives
	\begin{equation}\label{eqIntervalProjection}
		\norm{P_I}_{B(L^r(\T;X))}
		\le3M_{r,X}.
	\end{equation}
	
	We first record the upper orbit estimate which will be used below.
	
	\begin{lemma}\label{lemCunyBlock}
		Let $X$ be UMD, let $1<r<\infty$, and let $T\in B(X)$ be Kreiss bounded.
		Then, for all integers $0\le a\le b$ and every $x\in X$,
		\begin{equation}\label{eqCunyBlock}
			\left(
			\int_{\T}
			\norm{\sum_{n=a}^b\gamma^{-n}T^nx}^r\,d\gamma
			\right)^{1/r}
			\le
			B_{r,X}C_K(T)(b+1)\norm{x},
		\end{equation}
		where one may take $B_{r,X}:=18eM_{r,X}^3$.
	\end{lemma}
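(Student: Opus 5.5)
The plan is to realize the block $\sum_{n=a}^b\gamma^{-n}T^nx$ as the image, under a bounded-variation Fourier multiplier, of a damped full orbit series. The damped series is a resolvent, so the Kreiss condition bounds it pointwise on $\T$. Fix $0\le a\le b$, $x\in X$, and put $\rho:=1-\frac1{b+2}$, so that $\rho^{-n}\le(1+\frac1{b+1})^{b+1}\le e$ for $0\le n\le b+1$.

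\emph{Step 1 (resolvent bound).} Put $u_\rho(\gamma):=\sum_{n\ge0}\rho^n\gamma^{-n}T^nx$. Since $\rho\norm{T}$ may exceed $1$, I would not assume this series converges everywhere; instead I would work with the function
\[
\gamma\longmapsto \lambda R(\lambda,T)x,\qquad \lambda:=\gamma/\rho,\quad |\lambda|=\rho^{-1}>1 .
\]
This function is real-analytic on $\T$, because $\{|\lambda|=\rho^{-1}\}\subset\rho(T)$. Its Fourier coefficients are exactly $\rho^nT^nx$ for $n\ge0$ and $0$ for $n<0$. To see this, expand $\lambda R(\lambda,T)x=\sum_n\lambda^{-n}T^nx$ for $|\lambda|>\norm{T}$ and use analyticity of $R(\cdot,T)$ on $\{|\lambda|>1\}$ to move the contour integral defining each coefficient to the circle of radius $\rho^{-1}$. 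The definition of $C_K(T)$ then gives, for every $\gamma\in\T$,
\[
\norm{\lambda R(\lambda,T)x}\le \rho^{-1}\frac{C_K(T)}{\rho^{-1}-1}\norm{x}=\frac{C_K(T)}{1-\rho}\norm{x}=C_K(T)(b+2)\norm{x}\le 2C_K(T)(b+1)\norm{x}.
\]
Hence the $L^r(\T;X)$-norm of this function is at most $2C_K(T)(b+1)\norm{x}$.

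\emph{Step 2 (multiplier).} Let $a_n:=\rho^{-n}\one_{[a,b]}(n)$. Applying $T_a$ to the function of Step 1 gives exactly $\sum_{n=a}^b\gamma^{-n}T^nx$. We have $\sup_n|a_n|\le e$. The variation of $a$ consists of two jumps, each of size at most $e$, plus the monotone increase $\rho^{-b}-\rho^{-a}\le e$ inside $[a,b]$; so the total variation is at most $3e$. By \eqref{eqBVMultiplier}, $\norm{T_a}\le 4eM_{r,X}$.

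\emph{Step 3 (extending the multiplier).} The multiplier bound is stated on trigonometric polynomials, so it must be extended to the function of Step 1. Since $T_a$ only involves the finitely many frequencies $a\le n\le b$, the cleanest route is to take Fejér means $\sigma_N$ of that function. They are trigonometric polynomials with $\norm{\sigma_N}_{L^r}\le\norm{\lambda R(\lambda,T)x}_{L^r}$. For $N$ large, $T_a\sigma_N=\sum_{n=a}^b(1-\frac{|n|}{N+1})\gamma^{-n}T^nx$, which converges to the desired block. Passing to the limit yields
\[
\left(\int_\T\norm{\sum_{n=a}^b\gamma^{-n}T^nx}^r d\gamma\right)^{1/r}\le 4eM_{r,X}\cdot 2C_K(T)(b+1)\norm{x}\le 18eM_{r,X}^3C_K(T)(b+1)\norm{x},
\]
using $M_{r,X}\ge1$.

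The only delicate point is Step 1: identifying the Fourier coefficients of the resolvent function on the circle of radius $\rho^{-1}$ when $\rho\norm{T}>1$. This is handled by the contour-shift argument sketched there. Everything else is the routine variation count of Step 2 and the approximation of Step 3.
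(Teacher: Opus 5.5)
Your proof is correct, but it takes a more direct route than the paper. The paper imports from the proof of \cite[Theorem 3.1]{CunyLp} the bound $6eM_{r,X}^2C_K(T)(b+1)\norm{x}$ for the initial block $\sum_{n=0}^b\gamma^{-n}T^nx$. It then applies the interval projection $P_{[a,b]}$, paying a further factor $3M_{r,X}$, which is where $M_{r,X}^3$ comes from.

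You instead apply a single bounded-variation multiplier, $\rho^{-n}\one_{[a,b]}(n)$, directly to the resolvent function $\gamma\mapsto\lambda R(\lambda,T)x$. Truncation to $[a,b]$ and removal of the weights $\rho^n$ therefore happen in one step. This makes the lemma self-contained, with no appeal to \cite{CunyLp}, and gives the constant $8eM_{r,X}$, linear in $M_{r,X}$. In fact the total variation of your sequence is exactly $2\rho^{-b}\le 2e$, so you even get $6eM_{r,X}$. The paper's route only buys brevity.

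One simplification: the point you flag as delicate in Step 1 is not. Since $\{|\lambda|>1\}$ lies in the resolvent set of $T$, the spectral radius of $T$ is at most $1$. Hence $\sum_{n\ge0}\rho^n\norm{T^n}<\infty$, and the Neumann series converges absolutely and uniformly on $|\lambda|=\rho^{-1}$, whatever the size of $\rho\norm{T}$. This identifies the Fourier coefficients at once. In Step 3 you can then use the partial sums of this series instead of Fej\'er means: for $N\ge b$ your multiplier maps the $N$-th partial sum exactly onto the block, and the partial sums converge uniformly to the resolvent function. Your contour-shift and Fej\'er arguments are nonetheless valid as written.

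Also, you use the letter $a$ both for the lower endpoint and for the multiplier sequence; rename one of them.
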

	
	\begin{proof}
		As shown in the proof of \cite[Theorem 3.1]{CunyLp},
		\[
		\left\|\sum_{n=0}^b\gamma^{-n}T^nx\right\|_{L^r(\T;X)}
		\le 6eM^2_{r,X}C_K(T)(b+1)\|x\|.
		\]
		Applying the uniformly bounded Fourier projection $P_{[a,b]}$ (see \eqref{eqIntervalProjection}) gives
		\eqref{eqCunyBlock} with $B_{r,X} = 18eM^3_{r,X}$. 
	\end{proof}
	
	The next lemma is the key point. We use the sets $I_m,J_m$ and the operator
	$K_m$ introduced in Section \ref{secPrelimDiscrete}.
	
	\begin{lemma}\label{lemTriangularFourier}
		Let $X$ be UMD, let $1<r<\infty$, and let $T\in B(X)$ be Kreiss bounded.
		Then, for every $m\ge1$ and $f = (f_j)_{j\in J_m} \subset X$,
		\begin{equation}\label{eqTriangularFourier}
			\norm{\bigl((K_mf)_i\bigr)_{i\in I_m}}_{\mathcal F_r(X)}
			\le
			A_{r,X}C_K(T)m
			\norm{(f_j)_{j\in J_m}}_{\mathcal F_r(X)},
		\end{equation}
		where one may take $A_{r,X}:=24eM_{r,X}^3$.
	\end{lemma}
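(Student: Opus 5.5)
The plan is to run the Hilbertian proof of Proposition \ref{propTriangularDiscrete}(i) inside the Fourier norm $\mathcal F_r(X)$. The Plancherel multiplier argument is replaced by two observations: a \emph{pointwise-in-$\gamma$} resolvent bound, and the UMD multiplier bounds \eqref{eqBVMultiplier} and \eqref{eqIntervalProjection}. As before, set $\rho:=1-\frac1{2m}$ and define the damped operator
\[
(K_m^{(\rho)}f)_i:=\sum_{j\in J_m}\rho^{j-i}T^{j-i}f_j,\qquad i\in I_m .
\]
Writing $(D_If)_i=\rho^if_i$ and $(D_Jf)_j=\rho^jf_j$, we have $K_m=D_IK_m^{(\rho)}D_J^{-1}$. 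The three factors will be estimated separately in $\mathcal F_r(X)$.

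\emph{Step 1: the damped operator is a projected pointwise resolvent.} Put $F(\gamma):=\sum_{j\in J_m}\gamma^{-j}f_j$. For $\gamma\in\T$ the number $\lambda:=(\rho\gamma)^{-1}$ satisfies $|\lambda|=\rho^{-1}>1$. Hence
\[
(I-\rho\gamma T)^{-1}=\lambda R(\lambda,T)=\sum_{n\ge0}\rho^n\gamma^nT^n,
\]
and the Neumann series converges since $\rho\norm{T}<1$ eventually fails in general. If it does fail, I would instead use the identity $\lambda R(\lambda,T)$ and check the Fourier expansion through the Laurent series of the resolvent on the circle $|\lambda|=\rho^{-1}$; its coefficients are exactly $\rho^n T^n$ for $n\ge0$ as long as $\rho^{-1}$ exceeds the spectral radius, which holds by Kreiss boundedness. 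The Kreiss condition then gives the bound, uniform in $\gamma$,
\[
\norm{(I-\rho\gamma T)^{-1}}\le |\lambda|\frac{C_K(T)}{|\lambda|-1}=\frac{C_K(T)}{1-\rho}=2mC_K(T).
\]
Therefore $G(\gamma):=(I-\rho\gamma T)^{-1}F(\gamma)$ satisfies $\norm{G}_{L^r(\T;X)}\le 2mC_K(T)\norm{F}_{L^r(\T;X)}$. Expanding, $G(\gamma)=\sum_{n\ge0}\sum_{j\in J_m}\rho^n\gamma^{-(j-n)}T^nf_j$. The frequency $i=j-n$ coefficient is $\sum_{j}\rho^{j-i}T^{j-i}f_j$, summed over $j\ge i$. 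Since every $j\in J_m$ is at least every $i\in I_m$, we get $P_{I_m}G=\sum_{i\in I_m}\gamma^{-i}(K_m^{(\rho)}f)_i$. By \eqref{eqIntervalProjection} this gives
\[
\norm{K_m^{(\rho)}f}_{\mathcal F_r}\le 3M_{r,X}\cdot 2mC_K(T)\norm{f}_{\mathcal F_r}.
\]

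\emph{Step 2: removing the weights.} The operators $D_J^{-1}$ and $D_I$ act on trigonometric polynomials as Fourier multipliers, so \eqref{eqBVMultiplier} applies. For $D_J^{-1}$, take $a_n=\rho^{-n}$ for $n\in J_m$, extended constantly outside $J_m$; this changes nothing on inputs supported in $J_m$. The sequence is monotone, with $\sup|a_n|\le\rho^{-(2m-1)}\le e$ and total variation at most $e$, so its norm is at most $2eM_{r,X}$. For $D_I$, take $a_n=\rho^n$ on $I_m$, extended constantly; it has $\sup\le1$ and variation $\le1$, so its norm is at most $2M_{r,X}$. Multiplying the three bounds gives
\[
2M_{r,X}\cdot 6mM_{r,X}C_K(T)\cdot 2eM_{r,X}=24eM_{r,X}^3C_K(T)m,
\]
which is exactly the constant $A_{r,X}$ in \eqref{eqTriangularFourier}.

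\emph{Main obstacle.} The delicate point is Step 1, namely justifying that $\gamma\mapsto(I-\rho\gamma T)^{-1}F(\gamma)$ has the claimed Fourier coefficients when $\norm{T}$ may exceed $\rho^{-1}$. I would handle this via the Laurent expansion of $R(\cdot,T)$ on $\{|\lambda|>r(T)\}$, using that Kreiss boundedness forces $r(T)\le1<\rho^{-1}$. Absolute convergence there follows from $\norm{T^n}=O(n)$, or more directly from $\limsup\norm{T^n}^{1/n}\le1$. Once this identification is secured, the pointwise resolvent bound transfers to the $L^r(\T;X)$ norm for free. This is why no Plancherel argument, and hence no Hilbert structure, is needed at this step; UMD enters only through the projection and the weight multipliers.
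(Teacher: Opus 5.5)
Your proposal is correct and follows essentially the same route as the paper. You use the pointwise Kreiss bound for $(I-\rho\gamma T)^{-1}F$, then the projection $P_{I_m}$ (constant $3M_{r,X}$) to identify the coefficients $(K_m^{(\rho)}f)_i$, and then remove the weights with the same two monotone bounded-variation multipliers as the paper's $\alpha,\beta$, which yields $2M_{r,X}\cdot 6M_{r,X}C_K(T)m\cdot 2eM_{r,X}$.

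The only blemish is the garbled sentence about $\rho\norm{T}<1$. The clean justification, which you do give afterwards, is that Kreiss boundedness forces $r(T)\le 1$, so $\sum_n\rho^n\norm{T^n}<\infty$ and the Neumann series converges absolutely in operator norm; no separate Laurent-series argument is needed.
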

	
	\begin{proof}
		Set $\rho:=1-\frac1{2m}$. For
		$(f_j)_{j\in J_m} \subset X$, put
		\[
		F(\gamma):=\sum_{j\in J_m}\gamma^{-j}f_j,
		\qquad
		G_\rho(\gamma):=(I-\rho\gamma T)^{-1}F(\gamma).
		\]
		Since $(I-\rho\gamma T)^{-1}
		=(\rho\gamma)^{-1}
		R((\rho\gamma)^{-1},T)$, the Kreiss condition gives
		\[
		\norm{G_\rho}_{L^r(\T;X)}
		\le2mC_K(T)\norm{F}_{L^r(\T;X)}.
		\]

		Expanding the resolvent, the Fourier coefficient of $G_\rho$ of index
		$i\in I_m$ is
		\[
		(K_m^{(\rho)}f)_i
		:=
		\sum_{j\in J_m}\rho^{j-i}T^{j-i}f_j.
		\]
		Thus $\sum_{i\in I_m}\gamma^{-i}(K_m^{(\rho)}f)_i=P_{I_m}G_\rho(\gamma)$,
		and \eqref{eqIntervalProjection} yields
		\[
		\norm{\bigl((K_m^{(\rho)}f)_i\bigr)_{i\in I_m}}_{\mathcal F_r(X)}
		\le
		6M_{r,X}C_K(T)m
		\norm{(f_j)_{j\in J_m}}_{\mathcal F_r(X)}.
		\]
		
		It remains to remove the weights. Set $g_j:=\rho^{-j}f_j$ for $j\in J_m$.
		Then, for every $i\in I_m$,
		\[
		(K_m^{(\rho)}g)_i
		=
		\sum_{j\in J_m}\rho^{j-i}T^{j-i}\rho^{-j}f_j
		=
		\rho^{-i}(K_mf)_i.
		\]
		
		Define the scalar sequences $\alpha=(\alpha_n)_{n\in\Z}$ and
		$\beta=(\beta_n)_{n\in\Z}$ by
		\[
		\alpha_n=
		\begin{cases}
			1, & n\le0,\\
			\rho^n, & 0\le n\le m-1,\\
			\rho^{m-1}, & n\ge m-1,
		\end{cases}
		\qquad
		\beta_n=
		\begin{cases}
			\rho^{-m}, & n\le m,\\
			\rho^{-n}, & m\le n\le2m-1,\\
			\rho^{-(2m-1)}, & n\ge2m-1.
		\end{cases}
		\]
		
		Hence \eqref{eqBVMultiplier} gives
		\[
		\norm{T_\alpha}_{B(L^r(\T;X))}
		\le 2M_{r,X},
		\qquad
		\norm{T_\beta}_{B(L^r(\T;X))}
		\le 2eM_{r,X}.
		\]
		
		Now
		\[
		\sum_{i\in I_m}\gamma^{-i}(K_mf)_i
		=
		T_\alpha\left(
		\sum_{i\in I_m}\gamma^{-i}(K_m^{(\rho)}g)_i
		\right),
		\qquad
		\sum_{j\in J_m}\gamma^{-j}g_j
		=
		T_\beta\left(
		\sum_{j\in J_m}\gamma^{-j}f_j
		\right).
		\]
		Combining these identities with the preceding weighted estimate yields
		\[
		\begin{aligned}
			\norm{\bigl((K_mf)_i\bigr)_{i\in I_m}}_{\mathcal F_r(X)}
			&\le
			2M_{r,X}\,6M_{r,X}C_K(T)m\,2eM_{r,X}
			\norm{(f_j)_{j\in J_m}}_{\mathcal F_r(X)}
			\\
			&=
			24eM_{r,X}^3C_K(T)m
			\norm{(f_j)_{j\in J_m}}_{\mathcal F_r(X)}.
		\end{aligned}
		\]
		Thus \eqref{eqTriangularFourier} holds with
		$A_{r,X}:=24eM_{r,X}^3$.
	\end{proof}
	
	We now combine the preceding estimate with lower Fourier decompositions.
	Recall that $X$ is said to have lower $\ell^q(L^r)$-Fourier decompositions
	if there exists $D\ge1$ such that, for every interval partition
	$\mathcal I$ of $\Z$ and every $X$-valued trigonometric polynomial $F$,
	\begin{equation}\label{eqLowerFourierDecomposition}
		\left(
		\sum_{I\in\mathcal I}
		\norm{P_IF}_{L^r(\T;X)}^q
		\right)^{1/q}
		\le
		D\norm{F}_{L^r(\T;X)}.
	\end{equation}
	By \cite[Theorem 2.8]{DengLoristVeraar}, for every $1<r<\infty$, a Banach
	space $X$ is UMD if and only if it has lower
	$\ell^q(L^r)$-Fourier decompositions for some $1<q<\infty$. We now prove Theorem~\ref{thmUMD} in the following more precise form.
	
	\begin{thm}\label{thmUMDPolynomialGap}
		Let $1<r,q<\infty$, let $X$ be a Banach space with lower
		$\ell^q(L^r)$-Fourier decompositions with constant $D$, and let
		$T\in B(X)$ be Kreiss bounded. Then there exists $C>0$ such that
		\[
		\norm{T^N}
		\le
		C(N+1)^{1-\varepsilon},
		\qquad N\ge0,
		\]
		where
		\[
		\varepsilon
		=
		\frac1q
		\log_2\left(
		1+\frac{1}{(A_{r,X}DC_K(T))^q}
		\right)>0.
		\]
		In particular, every Kreiss bounded operator on a UMD Banach space has a
		polynomial gap below linear growth.
	\end{thm}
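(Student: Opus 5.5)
The plan is to run the reversed-orbit doubling argument of Lemma~\ref{lemDiscreteSelfImprovement}, but measuring blocks in the Fourier norm $\mathcal F_r(X)$. The lower Fourier decomposition \eqref{eqLowerFourierDecomposition} will turn the triangular estimate into additive growth of an $\ell^q$-sum of dyadic blocks.

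\textbf{Setup.} Fix $N\ge1$ and $x\in X$, and set $z_j:=T^{N-j}x$ for $0\le j\le N$. For an interval $I\subseteq\{0,\dots,N\}$ write
\[
\beta(I):=\norm{(z_j)_{j\in I}}_{\mathcal F_r(X)}.
\]
Consider the dyadic intervals $\Delta_0:=\{0\}$ and $\Delta_k:=\{2^{k-1},\dots,2^k-1\}$ for $k\ge1$, so that $I_{2^\ell}=\Delta_0\cup\dots\cup\Delta_\ell$ and $J_{2^\ell}=\Delta_{\ell+1}$. Put
\[
S(\ell):=\sum_{k=0}^{\ell}\beta(\Delta_k)^q .
\]
Since $\Delta_0$ is a single frequency, $S(0)=\beta(\{0\})^q=\norm{T^Nx}^q$.

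\textbf{Step 1: block comparison.} Apply Lemma~\ref{lemTriangularFourier} with $f_j:=z_j$ on $J_m$, which is legitimate when $2m\le N+1$. Since $T^{j-i}z_j=z_i$, we get $(K_mf)_i=m\,z_i$. The factor $m$ cancels against the right-hand side of \eqref{eqTriangularFourier}, leaving
\[
\beta(I_m)\le A_{r,X}C_K(T)\,\beta(J_m).
\]
This is the analogue of the reciprocal-weight step; no weights are needed here, because the Fourier norm is not a sum over indices.

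\textbf{Step 2: doubling.} Let $m=2^\ell$ and $F:=\sum_{i\in I_m}\gamma^{-i}z_i$. Apply \eqref{eqLowerFourierDecomposition} to the trigonometric polynomial $F$, using the interval partition of $\Z$ consisting of $\Delta_0,\dots,\Delta_\ell$ together with the two complementary half-lines, on which $P_IF=0$. This gives
\[
S(\ell)\le D^q\beta(I_m)^q\le (A_{r,X}DC_K(T))^q\,\beta(\Delta_{\ell+1})^q .
\]
Set $\theta:=(A_{r,X}DC_K(T))^{-q}$. Then $\beta(\Delta_{\ell+1})^q\ge\theta S(\ell)$, hence $S(\ell+1)\ge(1+\theta)S(\ell)$. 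Iterating from $S(0)=\norm{T^Nx}^q$ yields
\[
S(\ell)\ge 2^{\ell q\varepsilon}\norm{T^Nx}^q,
\qquad\text{so}\qquad
\beta(\Delta_{\ell+1})^q\ge\theta\,2^{\ell q\varepsilon}\norm{T^Nx}^q,
\]
valid whenever $2^{\ell+1}\le N+1$.

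\textbf{Step 3: upper bound and conclusion.} Bound a single block from above by Lemma~\ref{lemCunyBlock}. For an interval $[a,b]$ of $j$'s, the substitution $n=N-j$ together with $\gamma\mapsto\bar\gamma$ (which preserves Haar measure) gives
\[
\beta([a,b])=\norm{\sum_{n=N-b}^{N-a}\gamma^{-n}T^nx}_{L^r(\T;X)}\le B_{r,X}C_K(T)(N+1)\norm{x}.
\]
Choose $\ell$ maximal with $2^{\ell+1}\le N+1$, so that $2^\ell>(N+1)/4$. Combining this single-block bound with Step 2 gives
\[
\norm{T^Nx}\le \theta^{-1/q}\,2^{-\ell\varepsilon}B_{r,X}C_K(T)(N+1)\norm{x}\lesssim (N+1)^{1-\varepsilon}\norm{x}.
\]
Small $N$ is handled trivially. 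Using only the last block, rather than summing all $\ell+1$ blocks, avoids losing a factor $(\log N)^{1/q}$, and this is what keeps the exponent exactly $\varepsilon$. The final assertion about UMD spaces follows from \cite[Theorem 2.8]{DengLoristVeraar}.

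\textbf{Main obstacle.} The delicate point is Step 2. One cannot simply take $\Phi(m):=\beta(I_m)$ as the growing quantity, because the constant $D\ge1$ in \eqref{eqLowerFourierDecomposition} would destroy the doubling. The remedy is to use the additive dyadic sum $S$ as the growing quantity, and to invoke the lower decomposition only once per step, to dominate $S(\ell)$ by $\beta(I_{2^\ell})$. One also has to check that the decomposition applies to partitions with half-lines carrying zero projections, and to keep the frequency conventions straight under the reversal $j\mapsto N-j$.
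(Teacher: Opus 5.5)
Your proof is correct and follows the paper's argument. Your $S(\ell)$ is exactly the paper's $E_\ell$, and Steps 1--2 combine Lemma~\ref{lemTriangularFourier} with the lower Fourier decomposition in the same way to obtain $E_{k+1}\ge(1+\eta)E_k$. The only difference is cosmetic: you finish by bounding the last dyadic block $\beta(\Delta_{\ell+1})$ from above via $\beta(\Delta_{\ell+1})^q\ge\theta S(\ell)$, whereas the paper bounds the full block $a_L$ using $E_L\le D^q a_L^q$; this changes only the constant $C$, not $\varepsilon$.
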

	
	\begin{proof}
		Since the lower decomposition property implies that $X$ is UMD, Lemma
		\ref{lemTriangularFourier} applies. Fix $N\ge1$ and
		$x\in X$, and put $
		L:=\lfloor\log_2(N+1)\rfloor$ and for $0\le j\le2^L-1$, 
		\[
		z_j:=T^{N-j}x.
		\]
		
		For $0\le k\le L$ and $0\le l< L$ define 
		\[
		a_k
		:=
		\norm{(z_j)_{j\in I_{2^k}}}_{\mathcal F_r(X)}
		\quad \text{and} \quad 
		b_l
		:=
		\norm{(z_j)_{j\in J_{2^l}}}_{\mathcal F_r(X)}.
		\]
		
		For $0\le k\le L-1$ and $m:=2^k$, since
		$T^{j-i}z_j=z_i$ for $i\in I_m$ and $j\in J_m$, we have
		\[
		\sum_{j\in J_m}T^{j-i}z_j=mz_i,
		\qquad i\in I_m.
		\]
		Lemma \ref{lemTriangularFourier}, applied to
		$(z_j)_{j\in J_m}$, therefore gives for each $0\le k \le L-1 $,
		\begin{equation}\label{ineak}
			a_k\le A_{r,X}C_K(T) b_k.
		\end{equation}
		
		Now set $E_0 = \norm{T^Nx}^q$ and for $1 \le k \le L $
		\[
		E_k
		:=
		\norm{T^Nx}^q+\sum_{j=0}^{k-1}b_j^q.
		\]
		For $1\le k\le L$, the intervals $
		\{0\},J_1,J_2,\ldots,J_{2^{k-1}}
		$
		form a partition of $I_{2^k}$. Hence, after completing them to an interval
		partition of $\Z$, \eqref{eqLowerFourierDecomposition} applied to
		$
		F_k(\gamma)
		:=
		\sum_{j\in I_{2^k}}\gamma^{-j}z_j$,
		gives  for every $1\le k \le L$,
		\begin{equation}\label{ineEkak}
			E_k^{1/q}\le Da_k.
		\end{equation}
		Combining \eqref{ineak} and \eqref{ineEkak} 
		we obtain for every $0\le k \le L-1$, 
		\[
		b_k
		\ge
		\frac{1}{A_{r,X}DC_K(T)}E_k^{1/q},
		\]
		where for the case $k=0$ we used \eqref{ineak} and the fact that $D\ge 1$. Consequently, for every $0\le k \le L-1$,
		\[
		E_{k+1}
		=
		E_k+b_k^q
		\ge
		\left(
		1+\frac{1}{(A_{r,X}DC_K(T))^q}
		\right)E_k.
		\]
		
		Set $\eta:=(A_{r,X}DC_K(T))^{-q}$. Iterating the preceding inequality and using   \eqref{ineEkak} yields
		\begin{align*}
			D^qa_L^q \ge E_L
			\ge
			(1+\eta)^{L}E_0 
			=(1+\eta)^La_0^q =
			(1+\eta)^L\norm{T^Nx}^q.
		\end{align*}
		Since
		\[
		a_L
		=
		\left(
		\int_{\T}
		\norm{
			\sum_{j=0}^{2^L-1}\gamma^{-j}T^{N-j}x
		}^r\,d\gamma
		\right)^{1/r} = \left(
		\int_{\T}
		\norm{
			\sum_{n=N-2^L+1}^{N}\gamma^{-n}T^nx
		}^r\,d\gamma
		\right)^{1/r},
		\]
		Lemma \ref{lemCunyBlock} implies
		$a_L\le B_{r,X}C_K(T)(N+1)\norm{x}$, and hence
		\[
		(1+\eta)^{L/q}\norm{T^Nx}
		\le
		DB_{r,X}C_K(T)(N+1)\norm{x}.
		\]
		Since $
		(1+\eta)^{L/q}
		=
		(2^L)^{\frac1q\log_2(1+\eta)}$
		and $2^L>(N+1)/2$, we obtain
		\[
		\norm{T^Nx}
		\le
		C(N+1)^{1-\varepsilon}\norm{x},
		\]
		where
		\[
		\varepsilon
		=
		\frac1q\log_2(1+\eta)
		=
		\frac1q\log_2\left(
		1+\frac{1}{(A_{r,X}DC_K(T))^q}
		\right)>0,
		\]
		which proves the assertion.
	\end{proof}
	
	We finally specialize the result to $L^p$-spaces.
	
	\begin{cor}\label{corLpPolynomialGap}
		Assume that $(\Omega,\Sigma,\mu)$ is a $\sigma$-finite measure space, let
		$1<p<\infty$, and let $T\in B(L^p(\Omega))$ be Kreiss bounded. Then there
		exist $C>0$ and $\varepsilon>0$ such that
		\[
		\norm{T^N}_{B(L^p)}
		\le
		C(N+1)^{1-\varepsilon},
		\qquad N\ge0.
		\]
	\end{cor}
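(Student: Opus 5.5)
This corollary will follow directly from Theorem \ref{thmUMDPolynomialGap}, or equivalently Theorem \ref{thmUMD}, once we know that $L^p(\Omega)$ is a UMD space for $1<p<\infty$. The plan is therefore to verify the UMD property and then quote the theorem with $X=L^p(\Omega)$.

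\textbf{Step 1: $L^p(\Omega)$ is UMD.} This is classical, via Burkholder's theorem.
\begin{itemize}
\item For scalar-valued martingales on a probability space, martingale transforms by signs are bounded on $L^s$ for every $1<s<\infty$.
\item To pass to $X=L^p(\Omega)$-valued martingales, fix the exponent $s=p$ (UMD is independent of the exponent).
\item By Fubini, the norm of an $L^p(\Omega)$-valued martingale difference sum in $L^p(\Omega';L^p(\Omega))$ equals $\int_\Omega$ of the $L^p(\Omega')$-norms, raised to the power $p$, of the scalar martingales obtained by fixing $\omega\in\Omega$.
\item Applying the scalar Burkholder inequality pointwise in $\omega$ and integrating gives the UMD inequality for $L^p(\Omega)$ with constant $p^*-1$.
\end{itemize}
The $\sigma$-finiteness assumption on $\Omega$ is exactly what makes this Fubini argument legitimate. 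Alternatively, one can note that strongly measurable functions take values in a separable subspace, which lives on a $\sigma$-finite piece of $\Omega$.

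\textbf{Step 2: obtain a lower Fourier decomposition.} By \cite[Theorem 2.8]{DengLoristVeraar}, fixing any $1<r<\infty$ (say $r=p$), the space $X=L^p(\Omega)$ has lower $\ell^q(L^r)$-Fourier decompositions for some $1<q<\infty$, with some constant $D\ge1$.

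\textbf{Step 3: conclude.} Theorem \ref{thmUMDPolynomialGap} applied to the Kreiss bounded operator $T$ gives $\norm{T^N}\le C(N+1)^{1-\varepsilon}$ for $N\ge1$. The exponent is
\[
\varepsilon=\frac1q\log_2\bigl(1+(A_{r,X}DC_K(T))^{-q}\bigr)>0.
\]
The case $N=0$ is trivial after enlarging $C$ so that $C\ge1$.

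The only step that needs care is Step 1, and it is standard. Once it is in place, the corollary is immediate. No positivity of $T$ is needed here, in contrast with Theorem \ref{thmDiscreteMain}(ii); the price is that $\varepsilon$ is no longer expressed only through $C_A(T)$ and $p$.
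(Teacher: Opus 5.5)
Your proposal is correct and follows the paper's route: you feed a lower $\ell^q(L^r)$-Fourier decomposition of $L^p(\Omega)$ into Theorem~\ref{thmUMDPolynomialGap}, and the only difference is that the paper quotes \cite[Example 2.12]{DengLoristVeraar} for that decomposition directly, rather than passing through Burkholder's theorem and \cite[Theorem 2.8]{DengLoristVeraar}. The paper also runs the same argument for $T^*$ on $L^{p'}(\Omega)$, using $C_K(T^*)=C_K(T)$ and $\norm{T^N}=\norm{(T^*)^N}$, and takes the better of the two exponents; this is an optional refinement that the statement does not require.
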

	
	\begin{proof}
		According to \cite[Example 2.12]{DengLoristVeraar}, $L^p(\Omega)$ has lower
		$\ell^q(L^p)$-Fourier decompositions for a suitable $1<q<\infty$. Hence
		Theorem \ref{thmUMDPolynomialGap} gives the result. Applying the same argument
		to $T^*$ on $L^{p'}(\Omega)$, using $C_K(T^*)=C_K(T)$ and
		$\norm{T^N}=\norm{(T^*)^N}$, yields a second exponent, and one may take the
		better of the two resulting exponents.
	\end{proof}

	\appendix
	\section{Proofs of the triangular estimates}\label{appTriangular}
	
	We collect here the Fourier multiplier arguments used in the Hilbertian parts of Propositions \ref{propTriangularContinuous} and \ref{propTriangularDiscrete}.

	\begin{prop}\label{propHilbertMultipliersApp}
		\begin{enumerate}[(i)]
			
			\item Let $(T_t)_{t\ge0}$ be a Kreiss bounded $C_0$-semigroup on a complex Hilbert space $H$ with generator $-A$. For $r>0$, the operator 
			\[
			(\mathcal C_r f)(s) := \int_0^\infty e^{-ru}T_u f(s+u)\,du \qquad (s\in\R),
			\]
			defined initially on $L^1(\R;H)\cap L^2(\R;H)$, extends uniquely to $\mathcal C_r\in B\bigl(L^2(\R;H)\bigr)$ with
			\begin{equation}\label{eq:continuous-Cr-bound-app}
				\norm{\mathcal C_r}_{B(L^2(\R;H))} \le \frac{C_K(T)}{r}.
			\end{equation}
			Moreover, for every $f\in L^2(\R;H)$ and almost every $\beta\in\R$,
			\begin{equation}\label{eqContinuousLMultiplierApp}
				\widehat{\mathcal C_r f}(\beta) = R(r-i\beta,-A)\widehat f(\beta).
			\end{equation}
			
			\item Let $T\in B(H)$ be Kreiss bounded and $0<\rho<1$. The operator
			\[
			(C_\rho f)_i := \sum_{n=0}^\infty \rho^nT^nf_{i+n} \qquad (i\in\Z),
			\]
			defined initially on $\ell^1(\Z;H)\cap\ell^2(\Z;H)$, extends uniquely to $C_\rho\in B\bigl(\ell^2(\Z;H)\bigr)$ with
			\begin{equation}\label{eq:discrete-Crho-bound-app}
				\norm{C_\rho}_{B(\ell^2(\Z;H))} \le \frac{C_K(T)}{1-\rho}.
			\end{equation}
			Moreover, for every $f\in\ell^2(\Z;H)$ and almost every $\gamma\in\T$ (with normalized Haar measure),
			\begin{equation}\label{eqDiscreteMultiplierApp}
				\widehat{C_\rho f}(\gamma) = (I-\rho\gamma T)^{-1}\widehat f(\gamma).
			\end{equation}
			
		\end{enumerate}
	\end{prop}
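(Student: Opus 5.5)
We would prove both parts on the Fourier side via Plancherel, starting with (i).

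\textbf{Continuous case.} For $f\in L^1(\R;H)\cap L^2(\R;H)$ and $r>0$, Kreiss boundedness gives $\sup_{t}\norm{T_t}e^{-\omega t}<\infty$ for every $\omega>0$. Indeed, Kreiss boundedness places the spectrum in the closed left half-plane with a uniformly bounded resolvent on the right of every vertical line, and on a Hilbert space the Gearhart--Pr\"uss theorem then gives $\omega_0\le0$. Consequently $u\mapsto e^{-ru}\norm{T_u}$ is integrable on $[0,\infty)$, and the defining integral of $\mathcal C_rf$ converges absolutely in $L^1(\R;H)$ by Fubini. Taking the Fourier transform and substituting $v=s+u$ gives
\[
\widehat{\mathcal C_rf}(\beta)=\int_0^\infty e^{-ru}e^{i\beta u}T_u\,du\,\widehat f(\beta)=\int_0^\infty e^{-(r-i\beta)u}T_u\,du\,\widehat f(\beta)=R(r-i\beta,-A)\widehat f(\beta),
\]
where the last equality is the Laplace representation of the resolvent, valid since $\operatorname{Re}(r-i\beta)=r>\omega_0$. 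By the definition of $C_K(T)$, $\norm{R(r-i\beta,-A)}\le C_K(T)/r$ for all $\beta$. Plancherel on $L^2(\R;H)$ then yields $\norm{\mathcal C_rf}_2\le \frac{C_K(T)}{r}\norm f_2$ on a dense subspace, so $\mathcal C_r$ extends uniquely. The identity \eqref{eqContinuousLMultiplierApp} passes to all $f\in L^2$ by density, because both sides are bounded operators (the multiplier is a bounded strongly measurable function of $\beta$); almost-everywhere equality follows by extracting an a.e.\ convergent subsequence.

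\textbf{Discrete case.} This is identical and simpler. Kreiss boundedness gives $\sup_{|\lambda|>1}(|\lambda|-1)\norm{R(\lambda,T)}<\infty$, hence spectral radius $\le1$. Since $0<\rho<1$, the Neumann series $\sum_n\rho^nT^n\gamma^n$ converges absolutely in norm uniformly in $\gamma\in\T$, and equals $(I-\rho\gamma T)^{-1}$. For finitely supported $f$, computing $\widehat{C_\rho f}(\gamma)=\sum_i\sum_n\rho^nT^nf_{i+n}\gamma^{-i}$ and substituting $j=i+n$ gives $\sum_n(\rho\gamma)^nT^n\widehat f(\gamma)=(I-\rho\gamma T)^{-1}\widehat f(\gamma)$. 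Writing $(I-\rho\gamma T)^{-1}=\lambda R(\lambda,T)$ with $\lambda=(\rho\gamma)^{-1}$, $|\lambda|=1/\rho$, we get
\[
\norm{(I-\rho\gamma T)^{-1}}\le \frac1\rho\cdot\frac{C_K(T)}{1/\rho-1}=\frac{C_K(T)}{1-\rho}.
\]
Parseval on $\ell^2(\Z;H)\cong L^2(\T;H)$ then gives \eqref{eq:discrete-Crho-bound-app}, and the extension and the identity \eqref{eqDiscreteMultiplierApp} follow by density as before.

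\textbf{Main obstacle.} The only delicate point is the continuous case, namely justifying absolute convergence and the Laplace representation, which requires $\omega_0(T)\le 0$. The plan is to obtain this from Gearhart--Pr\"uss, or alternatively from the known linear bound $\norm{T_t}=O(t)$ for Kreiss bounded semigroups on Hilbert spaces. If one prefers to avoid this, one can instead define $\mathcal C_r$ directly as the Fourier multiplier $R(r-i\beta,-A)$ and verify that it agrees with the integral formula on compactly supported smooth $f$, computing the integral first over $[0,U]$ and letting $U\to\infty$.
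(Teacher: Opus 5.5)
Your proposal is correct and follows essentially the same route as the paper. Like the paper, you use absolute convergence from $\int_0^\infty e^{-ru}\norm{T_u}\,du<\infty$ (resp.\ $\sum_n\rho^n\norm{T^n}<\infty$), then Fubini and the change of variables to identify the multiplier through the Laplace/Neumann representation of the resolvent, then the Kreiss bound on that multiplier, Plancherel/Parseval, and density. The only addition is that you justify $\omega_0(T)\le 0$ via Gearhart--Pr\"uss (or the known $O(t)$ bound), which the paper simply asserts. You should also state explicitly that $\mathcal C_rf\in L^2(\R;H)$ for $f\in L^1\cap L^2$ before invoking Plancherel; this follows from Minkowski's integral inequality, as the paper notes.
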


	\begin{proof}
		
		For (i), since $(T_t)_{t\ge0}$ is Kreiss bounded on $H$, for each $r>0$
		\[
		\int_0^\infty e^{-ru}\norm{T_u}\,du<\infty.
		\]
		For $f\in L^1(\R;H)\cap L^2(\R;H)$, Minkowski's inequality shows that $\mathcal C_rf\in L^2(\R;H)$. Moreover,
		\[
		\int_\R\int_0^\infty e^{-ru}\norm{T_uf(s+u)}\,du\,ds
		\le \norm{f}_{L^1(\R;H)}\int_0^\infty e^{-ru}\norm{T_u}\,du<\infty,
		\]
		so Fubini's theorem and the change of variables $v=s+u$ give
		\begin{align*}
			\widehat{\mathcal C_rf}(\beta)
			&=\int_0^\infty e^{-ru}T_u
			\left(\int_\R e^{-i\beta s}f(s+u)\,ds\right)du\\
			&=\left(\int_0^\infty e^{-(r-i\beta)u}T_u\,du\right)\widehat f(\beta)\\
			&=R(r-i\beta,-A)\widehat f(\beta).
		\end{align*}
		By the Fourier-Plancherel theorem and the Kreiss condition,
		\[
		\norm{\mathcal C_rf}_{L^2(\R;H)}
		\le \frac{C_K(T)}r\norm{f}_{L^2(\R;H)}.
		\]
		Density gives \eqref{eq:continuous-Cr-bound-app} and \eqref{eqContinuousLMultiplierApp} on all of $L^2(\R;H)$.
		
		\medskip
		
		For (ii), since $T$ is Kreiss bounded, for every $0<\rho <1$,
		\[
		\sum_{n=0}^\infty \rho^n\norm{T^n}<\infty.
		\]
		For $f\in \ell^1(\Z;H)\cap\ell^2(\Z;H)$,
		\[
		\sum_{i\in\Z}\sum_{n=0}^\infty \rho^n\norm{T^nf_{i+n}}
		\le \norm{f}_{\ell^1(\Z;H)}\sum_{n=0}^\infty \rho^n\norm{T^n}<\infty.
		\]
		Thus the sums may be interchanged and then
		\begin{align*}
			\widehat{C_\rho f}(\gamma)
			&=\sum_{n=0}^\infty \rho^nT^n
			\sum_{i\in\Z}f_{i+n}\gamma^{-i}=\sum_{n=0}^\infty(\rho\gamma T)^n\widehat f(\gamma)=(I-\rho\gamma T)^{-1}\widehat f(\gamma).
		\end{align*}
		If $\lambda=(\rho\gamma)^{-1}$, then
		$(I-\rho\gamma T)^{-1}=\lambda R(\lambda,T)$ and therefore
		\[
		\norm{(I-\rho\gamma T)^{-1}}
		\le \frac{C_K(T)}{1-\rho}.
		\]
		The Parseval theorem yields
		\[
		\norm{C_\rho f}_{\ell^2(\Z;H)}
		\le \frac{C_K(T)}{1-\rho}\norm{f}_{\ell^2(\Z;H)}.
		\]
		Since $\ell^1(\Z;H)\cap\ell^2(\Z;H)$ is dense in $\ell^2(\Z;H)$, this proves \eqref{eq:discrete-Crho-bound-app} and the multiplier identity \eqref{eqDiscreteMultiplierApp} extends by density.
	\end{proof}
	
	\section*{Declaration on the use of generative AI}
	
	During the preparation of this manuscript, the author used ChatGPT (GPT-5.6 Sol, OpenAI) for language editing, structural refinements, and exploratory mathematical discussions regarding proof strategies. All mathematical arguments and proofs were independently verified by the author, who takes full responsibility for the contents of the paper.

\end{document}